\newtheorem{theorem}{Theorem}
\newtheorem{corollary}{Corollary}
\newtheorem{lemma}{Lemma}
\theoremstyle{definition}
\DeclareMathOperator{\vspan}{span}
\DeclareMathOperator*{\argmin}{argmin}
\title{The Kernel Perspective on Dynamic Mode Decomposition}
\author{
  Efrain Gonzalez \\
  Department of Mathematics and Statistics\\
  University of South Florida\\
  Tampa, Fl 33620 \\
  \texttt{egonzalez@usf.edu} \\
  \And
  Moad Abudia \\
  School of Mechanical and Aerospace Engineering \\
  Oklahoma State University\\
  Stillwater, OK 74078 \\
  \texttt{abudia@okstate.edu} \\
  \And
  Michael Jury\\
  Department of Mathematics\\
  University of Florida\\
  Gainesville, FL 32608\\
  \texttt{mjury@ufl.edu}
  \And
  Rushikesh Kamalapurkar \\
  School of Mechanical and Aerospace Engineering \\
  Oklahoma State University\\
  Stillwater, OK 74078 \\
  \texttt{rushikesh.kamalapurkar@okstate.edu} \\
  \And
  Joel A. Rosenfeld \thanks{The following YouTube playlist discusses the content of this manuscript: \url{https://youtube.com/playlist?list=PLldiDnQu2phuB64ccOYWxeSBZxq0Gg47g}}\\
  Department of Mathematics and Statistics\\
  University of South Florida\\
  Tampa, Fl 33620 \\
  \texttt{rosenfeldj@usf.edu} \\
}
\begin{document}

\maketitle

\begin{abstract}
  This manuscript revisits theoretical assumptions concerning dynamic mode decomposition (DMD) of Koopman operators, including the existence of lattices of eigenfunctions, common eigenfunctions between Koopman operators, and boundedness and compactness of Koopman operators. Counterexamples that illustrate restrictiveness of the assumptions are provided for each of the assumptions. In particular, this manuscript proves that the native reproducing kernel Hilbert space (RKHS) of the Gaussian RBF kernel function only supports bounded Koopman operators if the dynamics are affine. In addition, a new framework for DMD, that requires only densely defined Koopman operators over RKHSs is introduced, and its effectiveness is demonstrated through numerical examples.
\end{abstract}

\section{Introduction}

Dynamic mode decomposition (DMD) has been gaining traction as a model-free method of making short-run predictions for nonlinear dynamical systems using data obtained as snapshots of trajectories. DMD and its variant, extended DMD, have proven effective at extracting underlying governing principles for dynamical systems from data, and reconstruction methods using DMD allow for the modeling of nonlinear dynamical systems as a sum of exponential functions, which is analogous to models obtained for linear systems \citep{kutz2016dynamic}.

DMD is closely connected to the Koopman operator corresponding to discrete time dynamical systems \citep{schmid_2010}. The Koopman Operator $\mathcal{K}_F$ over a Hilbert function space, $H$, is a composition operator corresponding to the discrete time dynamics, $F : \mathbb{R}^n \to \mathbb{R}^n$, acting on functions in the Hilbert space. Specifically, $\mathcal{K}_Fg = g \circ F$. In the context of discrete time dynamical systems, $F$ is the discrete time dynamics that maps the current state of the system to a future state $x_{i+1} = F(x_i)$. Frequently, the function $g \in H$ is referred to as an \emph{observable}.

DMD aims to obtain a finite rank representation of the Koopman operator by studying its action on the full state observable (i.e. the identity function) \citep{schmid_2010}. Koopman operators over reproducing kernel Hilbert spaces (RKHSs) were studied to take advantage of infinite dimensional feature spaces to extract more information from the snapshots of a system in \cite{williams2015data}. This perspective also enacts a dimensionality reduction by formulating the DMD method in a reproducing kernel Hilbert space (RKHS) framework and implicitly using the kernel trick to compute inner products in the high-dimensional space of observables. In \citep{williams2014kernel}, it is shown that kernel-based DMD produces a collection of Koopman modes that agrees with other DMD results in the literature.

The introduction of kernel based techniques for Koopman analysis and DMD yields a new direction for Koopmanism. Traditionally, the theoretical foundations of Koopman operators, and their implementation in the study of dynamical systems, have been rooted in ergodic theory. When working with functions that are $L^2$ with respect to an ergodic measure, the Birkhoff ergodic theorem guarantees that time averages under the operation of the Koopman operator will converge almost everywhere, or with probability 1 \cite{budivsic2012applied}. This convergence gives invariants for the Koopman operator in $L^2$ with respect to the invariant measure corresponding to the dynamics, and the Wiener Wintner theorem can be leveraged to realize the spectrum of the operators.
 
This is somewhat unsatisfying, since for a deterministic continuous time dynamical system, the ultimate result of discretization and Koopman analysis is probabilistic. Moreover, since the point-wise convergence given by the Birkhoff theorem holds almost everywhere, as opposed to everywhere, there is no guarantee that convergence will be realized at a given point. Furthermore, since all numerical methods deal with finitely many data points, convergence is impossible to verify.
 
The new perspective added by kernel methods is that of approximations. Universal RKHSs, such as those corresponding to Gaussian RBFs and Exponential Dot Product kernel functions, have the ability to approximate any continuous function over a compact subset of $\mathbb{R}^n$ to any desired accuracy up to numerical precision. Moreover, when the kernel function is continuous or bounded, convergence in RKHS norm yields point wise everywhere convergence and uniform convergence over compact subsets of $\mathbb{R}^n$. Given a Koopman operator $\mathcal{K}_F$ corresponding to the discrete dynamics, $F$, an $\epsilon > 0$, and a compact subset $\Omega$ that is invariant for the dynamics, it follows that if $\phi$ satisfies
 \begin{equation}\label{eq:approximate-eigenfunction}
    \| K_F \phi - \lambda \phi \|_H \le \epsilon,
 \end{equation}
 then
 
\begin{equation}\label{eq:pointwise-estimate}
    | \phi(T^m(x)) - \lambda^m \phi(x) | \le C \epsilon \frac{1-\lambda^{m+1}}{1-\lambda}
\end{equation}
for $x \in \Omega$, where the accuracy of the resultant models depend on $\epsilon$ and $\lambda$. Significantly, as $\epsilon$ tends to zero, so does difference $ | \phi(T^m(x)) - \lambda^m \phi(x) | $ at every point in $x \in \Omega$. This everywhere convergence stands in contrast to ergodic methods, where convergence results only hold almost everywhere.
 
The study of dynamical systems through the Koopman formalism over RKHSs manifests as a search for functions that are close to being eigenfunctions of the Koopman operator, rather than the actual eigenfunctions. Since only a finite amount of data can be available for the study of an infinite dimensional operator, actual eigenfunctions typically cannot be computed. In fact, there is no requirement that $\mathcal{K}_F$ even has any eigenfunctions. Instead, the search for ``approximate" eigenfunctions is motivated by the universal approximation properties of kernel functions \cite{steinwart2008support}. This is particularly true of DMD, which attempts to construct a finite rank approximation of a Koopman operator from a collection of observed snapshots. Note that obtaining approximate eigenfunctions as in \eqref{eq:approximate-eigenfunction} is not dissimilar to the objective of ergodic methods, where approximation of system invariants and eigenfunctions using time averages is sought.
The existence eigenfunctions depends on the selection of the Hilbert space, as will be shown in Section \ref{sec:misconceptions}, and eigenfunctions may not be present even in the $L^2$ ergodic setting \cite{budivsic2012applied}.
 
The objective of this manuscript is to present the kernel perspective of Koopmanism as a distinct study from the ergodic perspective. In addition, this manuscript addresses several properties often assumed to hold for Koopman operators, and provides counterexamples showing that these properties do not hold in general for Koopman analysis over RKHSs. Finally, we will give a general and simplified algorithm for DMD using Koopman operators that only requires that the operators be densely defined.

\section{Reproducing Kernel Hilbert Spaces}\label{sec:rkhs}
A Reproducing Kernel Hilbert Space (RKHS) $H$ over a set $X$ is a Hilbert space composed of functions .
003from $X$ to $\mathbb{C}$ such that for all $x \in X$ the evaluation functional $E_x f := f(x)$ is bounded. Therefore, for all $x \in X$ there exists 
a function $K_x \in H$ such that $f(x)= \langle f, K_x\rangle$ for all $f \in H$. The function $K_x$ is the reproducing kernel centered at $x$ and the function $K:X \times X \to \mathbb{C}$ defined by $K(x,y)= \langle K_y,K_x \rangle$ is the unique kernel function corresponding to ${H}$ \citep{aronszajn1950theory}. Throughout most of this manuscript, the methods used will be restricted to RKHSs of real valued functions. However, for some specific examples in Section \ref{sec:misconceptions}, it will be more convenient to employ RKHSs of complex valued functions.

Reproducing kernels can be equivalently expressed as realizations of inner products of feature space mappings in $\ell^2(\mathbb{N})$. In particular, given an orthonormal basis for a RKHS, $\{ e_m(\cdot) \}_{n=1}^\infty \subset H$, the kernel function may be expressed as $K(x,y) = \sum_{m=1}^\infty e_m(x) \overline{e_m(y)}$, where $\Psi(x) := (e_1(x),e_2(x),\ldots) \in \ell^2(\mathbb{N})$ is called a feature map. Equivalently, given a feature mapping $\Psi : X \to \ell^2(\mathbb{N})$, there is a RKHS whose kernel function is given as $K(x,y) = \langle \Psi(x),\Psi(y)\rangle_{\ell^2}.$

We will make repeated use of projections onto finite dimensional vector spaces arising from spans of collections of kernels centered at snapshots from a dynamical system. For a collection of centers $\{ x_1, \ldots, x_m \}$, the projection of a function $g \in H$ onto $\alpha = \vspan\{ K_{x_1},\ldots, K_{x_m}\}$ is given as $\argmin_{h \in \alpha} \| h - g\|$, which can be resolved by expressing $h$ as $h = \sum_{i=1}^m w_i K_{x_i}$, expanding $\|\sum_{i=1}^m w_i K_{x_i} -g\|^2$ via inner products, and setting the derivative with respect to $w := (w_1, \ldots, w_m)^T \in \mathbb{R}^m$ to zero, resulting in the weights
\[ 
    \begin{pmatrix} w_1 \\ \vdots\\ w_m \end{pmatrix} = \begin{pmatrix} K(x_1,x_1) & \cdots & K(x_1,x_m)\\ \vdots & \ddots & \vdots\\ K(x_m,x_1) & \cdots & K(x_m,x_m)\end{pmatrix}^{-1} \begin{pmatrix} g(x_1) \\ \vdots \\ g(x_m) \end{pmatrix}
\]
The projection is then defined by $P_{\alpha} g := \sum_{i=1}^m w_i K_{x_i}$ where the weights $ w_i $ are obtained as above.

A RKHS of real valued functions, $H$, over $\Omega \subset \mathbb{R}^n$, is said to be universal if for any compact $V \subset \Omega$, $\epsilon > 0$, and $h \in C(V)$, there is a function $\tilde h \in H$ such that $\| h - \tilde h\|_\infty < \epsilon$, where $C(V)$ denotes the set of continuous functions defined on $V$. Many commonly used kernel functions satisfy this universality property, including the Gaussian RBF kernel functions and the exponential dot product kernel functions \cite{steinwart2008support}.

\section{Koopman Operators over RKHSs}\label{sec:koopmanoperators}
The theory of Koopman operators has been long intertwined with ergodic theory, where ergodic theoretic methods justify almost everywhere convergence claims of time averaging methods to invariants of the Koopman operator. The Birkhoff and Von Neumann ergodic theorems are posed over $L^1(\mathbb{R})$ and $L^p(\mathbb{R})$ for $p>1$ , respectively \cite{walters2000introduction}. However, the invariants for Koopman operators are not always analytic or even smooth. Hence, ergodic theorems do not give guarantees of convergence within most RKHSs, which are frequently composed of real analytic functions. 
Furthermore, even though ergodic theorems guarantee the existence of invariants over $L^2(\mathbb{R})$, the invariant itself is hidden behind a limiting operation via time averages \cite{walters2000introduction}. Hence, there is an expected error in the constructed invariant that stems from finiteness of data.

The objective of DMD methods is to find functions within the function space that nearly achieve eigenfunction behavior. Specifically, for a Koopman operator, $\mathcal{K}_F$, and $\epsilon > 0$, the objective is to find $\hat\varphi \in H$ and $\lambda \in \mathbb{C}$ for which $| \mathcal{K}_F \hat\varphi(x) - \lambda \hat \varphi(x)| < \epsilon$ for all $x$ in a given workspace. When such a function is discovered, the eigenfunction witnesses the snapshots, $x_{i+1} = F(x_i)$, as an exponential function as $\hat \varphi(x_{i+1}) = \lambda^{i+1} \hat \varphi(x_0) + \frac{1-\lambda^{i+1}}{1-\lambda} \cdot \epsilon.$  If $\hat\varphi$ is a proper eigenfunction for $\mathcal{K}_F$, then $\epsilon$ may be taken to be zero, and $\hat\varphi(x_{i+1}) = \lambda^{i+1} \hat\varphi(x_0)$.

Ergodic methods generally yield $| \mathcal{K}_F \hat\varphi(x) - \lambda \hat \varphi(x)| < \epsilon$ only for almost all $x$ within the domain of interest. For a RKHS, the condition $| \mathcal{K}_F \hat\varphi(x) - \lambda \hat \varphi(x)| < \epsilon$ may be relaxed to $\| \mathcal{K}_F \hat\varphi - \lambda \hat \varphi\|_H < \epsilon$, since $| \mathcal{K}_F \hat\varphi(x) - \lambda \hat \varphi(x)| < C \| \mathcal{K}_F \hat\varphi - \lambda \hat \varphi\| < C \epsilon$, where $C > 0$ depends on the kernel function and the point $x$. If the kernel function is continuous and the domain is compact, a finite $C$ may be selected uniformly for that domain. In the special case of the Gaussian RBF kernel function and the domain being $\mathbb{R}^n$, $C$ may be taken to be $1$.

If $\mathcal{K}_F$ is compact and the finite rank approximation of $\mathcal{K}_F$, which we will denote as $\hat{\mathcal{K}}_F$, is within $\epsilon$ of $\mathcal{K}_F$ with respect to the operator norm, and if $\hat\varphi$ is a normalized eigenfunction for $\hat{\mathcal{K}}_F$ with eigenvalue $\lambda$, then $\| \mathcal{K}_F \hat \varphi - \lambda \hat \varphi\|_H \le \| \mathcal{K}_F \hat \varphi - \hat{\mathcal{K}}_F \hat \varphi \|_H \le \| \mathcal{K}_F - \hat{\mathcal{K}}_F\| \le \epsilon.$ Hence, if we obtain a finite rank approximation of $\mathcal{K}_F$ that is within $\epsilon$ with respect to the operator norm, then an eigenfunction of the finite rank approximation will approximate the behavior of an eigenfunction of $\mathcal{K}_F$. This approximation is important in DMD, where the eigenfunctions are utilized to generate an approximation of the full state observable, $g_{id}(x) := x$, one dimension at a time, as outlined in Section \ref{sec:algorithm}.

If an accurate finite rank approximation of the Koopman operator can be obtained in a RKHS, then the approximation of the overall model is accurate point-wise everywhere. In contrast, the approximation is only accurate almost everywhere when considering Koopman operators posed over $L^2(\mathbb{R})$. Point-wise everywhere approximation is a distinctive advantage of kernel based methods that is not clear from an invocation of the kernel trick in machine learning, and requires the operator theoretic considerations introduced in this manuscript.

The need for approximation of the Koopman operator in the operator norm topology naturally leads to the question of when a Koopman operator, or more generally, a composition operator, can be compact. Compactness is a central issue for DMD as every approximation is indeed of finite rank, stemming from the observed data. In 1979, it was established in \cite{singh1979compact} that composition operators over $L^2(\mu)$ cannot be compact when $\mu$ is non-atomic. Indeed, $L^2(\mathbb{R})$ has no compact composition operators.

Most Koopman operators over most frequently considered RKHSs are compact for a very narrow range of dynamics. In fact, most Koopman operators over these spaces are not even bounded, as will be expanded upon in Section \ref{sec:boundednessofkoopman}. Unboundedness is an added complication for the valid implementation of DMD, addressed in Section \ref{sec:algorithm}, using densely defined and potentially unbounded Koopman operators. Alternatively, other classes of compact operators over RKHSs connected to dynamical systems can be leveraged for DMD procedures to give convergence guarantees (see, e.g., \cite{rosenfeld2022dynamic}).

The remainder of this section introduces densely defined Koopman operators over RKHSs, where Lemma \ref{lem:denselydefinedkoopmanproperty} enables the DMD algorithm introduced in Section \ref{sec:algorithm}. Let $H$ be a RKHS over $\mathbb{R}^n$. For a function $F:\mathbb{R}^n \to \mathbb{R}^n$ we define the Koopman Operator (sometimes called a composition operator), $\mathcal{K}_F : \mathcal{D}(\mathcal{K}_F) \to H$, as $\mathcal{K}_F g = g\circ F$ where $\mathcal{D}(\mathcal{K}_F)=\{g \in H : g \circ F \in H\}$. When $\mathcal{D}(\mathcal{K}_F)$ is dense in $H$, $\mathcal{K}_F$ is said to be densely defined. While not all densely defined Koopman operators over RKHSs are bounded, they are all closed operators.

\begin{lemma}\label{lem:denselydefinedkoopmanproperty} 
Let $F : X \to X$ be the symbol for a Koopman operator over a RKHS $H$ over a set $X$. $\mathcal{K}_F : \mathcal{D}(\mathcal{K}_F) \to H$ is a closed operator. 
\end{lemma}

\begin{proof}Suppose that $\{ g_m \}_{m=1}^\infty \subset \mathcal{D}(\mathcal{K}_F)$ such that $g_m \to g \in H$ and $\mathcal{K}_F g_m \to h \in H$. To show that $\mathcal{K}_F$ is closed, we must show that
$g \in \mathcal{D}(\mathcal{K}_F)$ and $\mathcal{K}_F g = h$. This amount to showing that $h = g\circ F$, by the definition of $\mathcal{D}(\mathcal{K}_F)$. Fix $x \in X$, then 
\begin{gather*}h(x) = \langle h, K_x\rangle = \lim_{m \to \infty} \langle \mathcal{K}_F g_m, K_x \rangle= \lim_{m\to\infty} g_m(F(x)) \\= \lim_{m\to\infty} \langle g_m, K_{F(x)} \rangle= \langle g, K_{F(x)}\rangle = g(F(x)).\end{gather*}
As $x$ was an arbitrary point in $X$, $h = g(F(x))$ and the proof is complete.
\end{proof}

If a Koopman operator is densely defined, its adjoints are densely defined and closed. Given the kernel function centered at $x$, $\langle \mathcal{K}_Fg, K_x\rangle=\langle g\circ F,K_x\rangle =g(F(x))=\langle g,K_{F(x)}\rangle$. Thus, $K_{x} \in \mathcal{D}(\mathcal{K}_F^*)$ for all $x \in X$, and $\mathcal{K}_F^*K_x=K_{F(x)}$. Hence, each kernel function is in the domain of the adjoint of a densely defined Koopman operator, and as the span of kernel functions is dense in their RKHS, the adjoint is densely defined.

\section{A Different Landscape for Koopman Operators}\label{sec:misconceptions}

This section examines properties of Koopman operators over RKHSs. The selection of space fundamentally changes the behavior of Koopman operators over that space, where properties such as the lattice of eigenfunctions, common eigenfunctions for different discretizations, and boundedness of the operators may not hold. Each succeeding subsection provides counter examples for each of these properties for specific spaces, and it is proven that the Gaussian RBF's native space only supports bounded Koopman operators if the discrete dynamics are affine.


Recently, kernel methods have been adapted for the study of DMD and Koopman operators, largely through the guise of extended DMD, where kernels are leveraged to simplify computations via the kernel trick. However, the adjustment from the classical study of Koopman operators through ergodic theory to that of reproducing kernel Hilbert spaces leads to significant differences in the Koopman operators and their properties. In most cases, the ergodic theorem cannot be directly applied to recover invariants of Koopman operators, since those invariants are often nonsmooth. This present section exemplifies some of the distinguishing properties of Koopman operators over RKHSs, and in some cases illustrates their limitations.

Much of the classical properties of Koopman operators in a variety of specific contexts, such as $L^2$ spaces of invariant measures and $L^1$ spaces, can be seen in \citep{budivsic2012applied,kawahara2016dynamic,kutz2016dynamic,brunton2019data,brunton2021modern}. Properties of the Koopman operator strongly depend on the selection of underlying vector space, and boundedness, compactness, eigenvalues, etc. change based on this selection. While Koopman operators were introduced by Koopman in 1931 in \citep{koopman1931hamiltonian} and then later picked up by the data science community in the early 2000s (e.g. \citep{mezic2005spectral,kutz2016dynamic}), the study of such operators and their properties continued in earnest throughout the 20th century as composition operators (e.g. \citep{shapiro2012composition}). This is particularly important for RKHSs, where the specification of a bounded or densely defined Koopman operator over a particular space yields strong restrictions on the available dynamics. 

\subsection{Concerning Sampling and Discretizations}
\subsubsection{Forward Complete Dynamics}

In applications, Koopman operators enter the theory of continuous time dynamics through a discretization of the continuous time dynamical system \citep{bittracher2015pseudogenerators,mauroy2016global}. That is, given the dynamical system $\dot x = f(x)$, the system is discretized through the selection of a fixed time-step, $\Delta t > 0$, as $x_{m+1} = x_m + \int_{t_m}^{t_{m} + \Delta t} f(x(t)) dt$, where the right hand side plays the role of the discrete dynamics. However, for such a discretization to exist for arbitrarily large values of $m$, it is necessary that the dynamics be \textit{forward complete}.

For example, consider the one dimensional dynamics, $\dot x = 1 + x^2$. For fixed $0 < \Delta t < \pi/2$, the corresponding discrete time dynamics are given as $x_{m+1} = \tan(\arctan(x_m) + \Delta t)$. Setting $x_m = \tan(\pi/2 - \Delta t)$, it is clear that $x_{m+1}$ is undefined. Consequently, the composition symbol, $F(x) = \tan(\arctan(x) + \Delta t)$ for the hypothetical Koopman operator, is not well defined over $\mathbb{R}^n$ for any selection of $\Delta t$.

The forward completeness assumption restricts the class of continuous dynamics that Koopman based methods may be applied. A DMD method that circumvents this requirement, by utilizing Liouville operators and occupation kernels, may be found in \citep{rosenfeld2022dynamic}.

\subsubsection{Sampling and Data Science}

Ergodic based methods as employed in \cite{budivsic2012applied,mezic2005spectral,kutz2016dynamic,kutz2016dynamic,takeishi2017learning} provide a methodology for obtaining invariants and eigenfunctions for a Koopman operator almost everywhere. That is, by selecting a continuous representative of an equivalence class in an $L^2$ space for the invariant measure, at almost every point within the domain, time averaging against that representative will converge to an invariant of the operator. However, this is a ``probability 1'' result, and the number of points where it may fail can potentially be uncountable. Without any external information concerning the convergence, there is no true guarantee that at particular selected point, the time-averaged approximation will be close to the value of an actual invariant at that point. Such computational issues is precisely where the strength of kernel methods manifests.

To illustrate the kernel method, suppose that $F: \mathbb{R}^n \to \mathbb{R}^n$ is a discretization of a dynamical system with the corresponding Koopman operator, $\mathcal{K}_F : H \to H$, and $H$ is a RKHS over $\mathbb{R}^n$ consisting of continuous functions. Suppose further that $\epsilon > 0$ and $\hat{\mathcal{K}}_F : H \to H$ is an approximation of $\mathcal{K}_F$ such that the norm difference is bounded as $\| \mathcal{K}_F - \hat{\mathcal{K}}_F \| < \epsilon$.

Suppose that $\hat{\varphi} \in H$ and is a normalized eigenfunction of $\hat{\mathcal{K}}_F$ with eigenvalue $\lambda$. The function $\hat{\varphi}$ behaves, pointwise, as an approximate eigenfunction of $\mathcal{K}_F$, since
\begin{gather*}
    | \mathcal{K}_F \hat{\varphi}(x) - \lambda \hat{\varphi}(x) | = | \mathcal{K}_F \hat{\varphi}(x) - \hat{\mathcal{K}}_F(x) \hat{\varphi}(x) |\\
    = | \langle (\mathcal{K}_F - \hat{\mathcal{K}}_F) \hat{\varphi},K(\cdot,x)\rangle_H | \le \|\mathcal{K}_F - \hat{\mathcal{K}}_F\| \|K(\cdot,x)\| \le \epsilon \cdot C\\
\end{gather*}
and $C > 0$ is a constant that depends on the kernel function and a prespecified compact domain. The compact domain may be extended to all of $\mathbb{R}^n$ in some cases, such as when the kernel function is the Gaussian RBF kernel function. Thus, it can be seen that kernel spaces and approximations that are close to the Koopman operator, in operator norm, can provide functions that behave similar to eigenfunctions of the Koopman operator. Moreover, the difference in behavior from a proper eigenfunction is governed pointwise by how close the operator approximation is in the first place.

\subsection{Properties of the Operators}

In this section, 
we consider a single simple example. Consider the dynamical system $\dot x = \begin{pmatrix} x_2 & -x_1 \end{pmatrix}^T$, which corresponds to circular dynamics in the plane. For any fixed $\theta := \Delta t$, the discretization of this system yields the linear discrete dynamics $x_{m+1} = \begin{pmatrix} \cos(\theta) & -\sin(\theta) \\ \sin(\theta) & \cos(\theta) \end{pmatrix} x_{m}$. That is, the discretization corresponding to a fixed time-step results in a fixed rotation of $\mathbb{R}^2$. To simplify the presentation, 
we use $\mathbb C$ as a model for $\mathbb R^2$, where rotation of the complex plane reduces to multiplication by a unimodular constant, $z_{m+1} = e^{i\theta} z_m$. The corresponding discrete time dynamics will be written as $F_\theta(z) := e^{i\theta}z$.

As a function space for definition of the Koopman operator, this section will consider the classical Fock space consisting of entire functions. The Fock space is used extensively in Quantum Mechanics \citep{hall2013quantum} and it is a space where operators have been well studied \citep{zhu2012analysis}. The Fock space is given as
\[ F^2(\mathbb{C}) := \left\{ f(z) = \sum_{m=0}^\infty a_m z^m : \sum_{m=0} |a_m|^2 m! < \infty \right\}. \]
The Fock space is a RKHS, with kernel function $K(z,w) = e^{\bar w z}$. Kernel function for the Fock space over $\mathbb{C}^n$ may be obtained through a product of single variable kernels as $K(z,w) = e^{w^* z} = e^{\bar w_1 z_1} \cdots e^{\bar w_n z_n}.$

Closely related to the Fock space is the exponential dot product kernel, $e^{x^Ty}$, where for a single variable, the exponential dot product kernel's native space may be obtained by restricting the Fock space to the reals, and then taking the real part of the restricted functions. Through a conjugation of the exponential dot product kernel, the Gaussian RBF may be obtained as $K_G(x,y) = e^{-\|x\|^2/2} e^{x^Ty} e^{-\|y\|^2/2} = \exp\left(-\frac{\|x-y\|^2}{2}\right),$ and performing the same operation on the Fock space kernel over $\mathbb{C}^n$ yields $K_G(z,w) = e^{-z^2/2} e^{ w^* z} e^{-\bar w^2/2} = \exp\left( -\frac{(z - \bar w)^2}{2}\right)$, which is the kernel corresponding to the complexified native space for the Gaussian radial basis function over $\mathbb{C}^n$ (cf. \citep{steinwart2008support}). This space may be expressed as
\[ H_{G}^2(\mathbb{C}) = \left\{ g(z) e^{-z^2/2} : g \in F^2(\mathbb{C}^n) \right\}, \]
and the native space corresponding to the Gaussian RBF can be obtained by taking the real parts of functions from $H_G^2$ and restricting to 
$\mathbb R^n$.

\subsubsection{Lattice of Eigenfunctions}

As presented in \citep{budivsic2012applied,klus2015numerical}, the eigenfunctions of Koopman operators over $L^1(\mathbb{R})$ form a lattice. That is if $\varphi_1$ and $\varphi_2$ are two eigenfunctions for the Koopman operator, then so is $\varphi_1 \cdot \varphi_2$. For the lattice to occur more generally, it is necessary for the product of the eigenfunctions to be a member of the underlying vector space. This closure property holds, for example, in the space of continuous functions and other Banach algebras. Hilbert spaces are not generally Banach algebras, and since it is desirable to work over Hilbert spaces for properties such as best approximations, projections, and orthonormal bases (cf. \citep{folland1999real}), it is important to demonstrate that the closure property of eigenfunctions of Koopman operators does not hold in general.

Setting $\theta = \pi$, the discrete dynamics corresponding to rotation by $\pi$ in the complex plane becomes $z_{m+1} = e^{i\pi} z_m = -z_m$. That is the corresponding Koopman operator, $\mathcal{K}_{F_\pi} : F^{2}(\mathbb{C}) \to F^2(\mathbb{C})$, is given as $\mathcal{K}_{F_{\pi}} g(z) := g(-z)$. Hence, every even function is an eigenfunction for this Koopman operator with eigenvalue $1$.

Any function $g \in F^2(\mathbb{C})$ exhibits a strict bound on its growth rate (cf. \citep{zhu2012analysis}). To wit, $|g(z)| = |\langle g, K(\cdot,z) \rangle_{F^2(\mathbb{C})}| \le \|g\|_{F^2(\mathbb{C})}\| K(\cdot,z)\|_{F^2(\mathbb{C})} = \| g\|_{F^2(\mathbb{C})} e^{\frac{|z|^2}{2}}.$ That is, if a function is in the Fock space then the function is of order at most $2$, and if the function is of order $2$ it has type at most $1/2$ (cf. \citep{boas2011entire}). Conversely, if an entire function is of order less than $2$, it is in the Fock space, and if it is of order $2$ and type less than $1/2$, then it is also in the Fock space. While functions of order $2$ and type $1/2$ can be in the Fock space, it does not hold for every such function. For example, $e^{z^2/2}$ is of order $2$ and type $1/2$, but is not in the Fock space.

Thus, $\varphi(z) = e^{z^2/4}$ is an eigenfunction for $\mathcal{K}_{F_{\pi}}$ in the Fock space. However, $\varphi\cdot\varphi = e^{z^2/2}$ is not in the Fock space, and cannot be an eigenfunction for $\mathcal{K}_{F_{\pi}}:F^2(\mathbb{C}) \to F^2(\mathbb{C})$. \textit{Hence, the eigenfunctions for $\mathcal{K}_{F_{\pi}}$ do not form a lattice.}

\subsubsection{Common Eigenfunctions}

The intuition behind the use of Koopman operators in the study of continuous time dynamical systems is that eigenfunctions for the Koopman operators should be ``close'' to that of the Koopman generator for small timesteps. However, semi-groups of Koopman operators do not always share a common collection of eigenfunctions.

For example, set $\theta = \pi/2$, which yields $F_{\pi/2}(z) = i z$. In this case, the polynomial $z^4 + z^8 \in F^2(\mathbb{C})$ is an eigenfunction for the Koopman operator corresponding to $\theta = \pi/2$ with eigenvalue $1$. However, $z^4 + z^8$ is not an eigenfunction for $\mathcal{K}_{F_{\pi/3}}$, as $(e^{i \pi/3} z)^4 + (e^{i \pi/3} z)^8 = e^{i 4\pi/3} z^4 + e^{i 2\pi/3} z^8$, and the constants cannot be factored out of the polynomial as an eigenvalue.

Hence, since each Koopman operator obtained through a fixed time-step may produce a different collection of eigenfunctions, there is no way to distinguish which, if any, should correspond to eigenfunctions of the Koopman generator.

\subsubsection{Boundedness of Koopman Operators}\label{sec:boundednessofkoopman}

Throughout the literature, it is frequently assumed that Koopman operators are bounded.
This assumption manifests as an unrestricted selection of observables in the study of the Koopman operator. When a Koopman operator is a densely defined operator whose domain is the entire Hilbert space, it is also closed. Hence, by the closed graph theorem (cf. \citep[Theorem 5.12]{folland1999real}), 
such an operator must be bounded. Furthermore, the collection of finite rank operators is dense in the collection of bounded operators over a Hilbert space in the strong operator topology (SOT) (cf. \citep[Paragraph 4.6.2]{pedersen2012analysis}). Convergence in SOT was independently studied in the work \citep{korda2018convergence}, where the DMD routine was demonstrated to converge to a bounded Koopman operator in SOT.

As mentioned in \citep{korda2018convergence}, SOT convergence does not in general lead to convergence of the eigenvalues. To preserve spectral convergence, the finite rank approximations produced by DMD algorithms need to converge to Koopman operators in the operator norm topology. The most direct approach, and one that leads to good pointwise estimates of eigenfunctions, is through the use of compact Koopman operators. However, it isn't immediately clear when one can expect a continuous dynamical system to yield a compact Koopman operator through discretization. For example, the Koopman operator corresponding to discretization of the continuous time system $\dot x = 0$ is the identity operator, $I$, for any fixed time step, and $I$ is not compact over any infinite dimensional Hilbert space.

In addition, for any given RKHS, the collection of bounded Koopman operators is very small. It was demonstrated in \citep{carswell2003composition} that a Koopman operator over the Fock space is bounded only when the corresponding discrete dynamics are \textit{affine}. It follows that the same result holds over the exponential dot product kernel's native space.

It may perhaps be less obvious that this result extends to the Gaussian RBF's native space, and a proof of this fact is given below. As far as the authors are aware, this is the first time this result has appeared in the literature, and it demonstrates that even for popular selections of RKHSs, the collection of bounded Koopman operators is small.
\begin{lemma}
If $\mathcal{K}_F$ is a bounded operator over the Gaussian RBF's native space, then $F$ is a real analytic vector valued function over $\mathbb{R}^n$.
\end{lemma}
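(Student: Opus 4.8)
The plan is to leverage two facts already available: a bounded Koopman operator is defined on all of $H$, and every function in the Gaussian RBF's native space is real analytic. First I would note that since $\mathcal{K}_F$ is bounded it is defined on all of $H$ (this also follows from closedness, Lemma \ref{lem:denselydefinedkoopmanproperty}, if one only assumes boundedness on a dense domain: approximate $g \in H$ by $g_m$ in the domain, observe that $\mathcal{K}_F g_m$ is Cauchy, and invoke closedness to place $g$ in the domain). Consequently $\mathcal{D}(\mathcal{K}_F) = H$, so for every $y \in \mathbb{R}^n$ the kernel function $K_y$ lies in the domain and $\mathcal{K}_F K_y = K_y \circ F \in H$.

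The central observation is then that $K_y \circ F$, being a member of the Gaussian native space, is automatically real analytic on $\mathbb{R}^n$, since every element of that space is the real part of the restriction of an entire function of the form $g(z) e^{-z^2/2}$ with $g \in F^2(\mathbb{C}^n)$. Evaluating pointwise, $(K_y \circ F)(x) = K(F(x), y) = e^{-\|F(x) - y\|^2/2}$, so for each fixed $y$ the function $x \mapsto e^{-\|F(x) - y\|^2/2}$ is real analytic.

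To pass from this to analyticity of $F$ itself, I would take logarithms: because $e^{-\|F(x) - y\|^2/2}$ is strictly positive and real analytic, and $\log$ is real analytic on $(0,\infty)$, the map $x \mapsto \|F(x) - y\|^2$ is real analytic for every fixed $y$. Writing $\|F(x) - y\|^2 = \|F(x)\|^2 - 2\langle F(x), y\rangle + \|y\|^2$ and differencing the cases $y = 0$ and $y = e_i$ isolates each coordinate via $F_i(x) = \tfrac{1}{2}\bigl(\|F(x)\|^2 + 1 - \|F(x) - e_i\|^2\bigr)$, exhibiting each $F_i$ as a real analytic function; hence $F$ is real analytic.

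The step I expect to carry the real weight is the structural input that membership in the Gaussian native space forces real analyticity, as this is precisely what converts the operator-theoretic hypothesis of boundedness into a regularity statement about $F$; the remaining extraction is elementary. I would note an equivalent route using the adjoint identity $\mathcal{K}_F^* K_x = K_{F(x)}$ from Section \ref{sec:koopmanoperators}: since $\mathcal{K}_F^*$ is bounded and the feature map $x \mapsto K_x$ is real analytic as an $H$-valued map, $x \mapsto K_{F(x)}$ is real analytic, and the same logarithm-and-difference extraction recovers $F$. This variant, however, requires separately establishing vector-valued real analyticity of the feature map, which the direct argument sidesteps.
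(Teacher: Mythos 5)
Your proposal is correct and follows essentially the same route as the paper: boundedness places each kernel $K_y$ in the domain so that $K_y\circ F$ lies in the native space and is therefore real analytic, taking the logarithm gives analyticity of $\|F(x)-y\|^2$, and comparing $y=0$ with $y=e_i$ isolates each coordinate $F_i$. The additional remarks (the closedness argument for the domain and the adjoint variant) are fine but not needed.
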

\begin{proof}
If $\mathcal{K}_F$ is bounded, then $\mathcal{K}_FK_y(x) = K_y(F(x)) = \exp(-\|F(x) - y\|^2)$ is in the RBF's native space for each $y \in \mathbb{R}^n$. Since every function in the RBF's native space is real analytic, so is $K_y(F(x))$, and thus, the logarithm, $-\|F(x) - y\|^2 = -\|F(x)\|^2 +2y^TF(x) - \|y\|^2$ is real analytic. This holds if $y=0$, and hence $\|F(x)\|^2$ is real analytic. Thus, for every $y$, the quantity $y^T F(x)$ is real analytic. That each component of $F(x)$ is real analytic follows from the selection of $y$ as the cardinal basis elements of $\mathbb{R}^n$, and this completes the proof.  
\end{proof}
\begin{lemma}If $F$ is a real analytic vector valued function that yields a bounded Koopman operator, $\mathcal{K}_F$, over a the Gaussian RBF's native space, then its extension to an entire function, $F:\mathbb{C}^n \to \mathbb{C}^n$ induces a bounded operator over $H_G(\mathbb{C}^n)$.\end{lemma}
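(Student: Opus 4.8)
The plan is to first upgrade the real-analytic symbol $F$ to a genuinely entire map $F:\mathbb{C}^n\to\mathbb{C}^n$, and then to transport the boundedness of the real composition operator to $H_G(\mathbb{C}^n)$ through a unitary identification of the real and complexified native spaces. I begin by recording what boundedness already buys. For each $y\in\mathbb{R}^n$ the function $\mathcal{K}_F K_y = K_G(F(\cdot),y)=\exp(-\tfrac12\|F(\cdot)-y\|^2)$ lies in the real native space $H_G(\mathbb{R}^n)$ precisely because $\mathcal{K}_F$ is bounded; being a member of that space, it is real analytic and extends to an entire function $\Psi_y\in H_G(\mathbb{C}^n)$. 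Since $F$ is real analytic it admits a holomorphic extension $\tilde F$ to some neighborhood $U\supset\mathbb{R}^n$ in $\mathbb{C}^n$, and the identity theorem forces $\Psi_y(z)=\exp(-\tfrac12\|\tilde F(z)-y\|^2)$ on $U$, where $\|\tilde F(z)-y\|^2:=\sum_j(\tilde F_j(z)-y_j)^2$.

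The main obstacle is to promote $\tilde F$ from this local extension to an entire map, since real analyticity alone never guarantees an entire extension; this is where the hypothesis does its real work. I would consider the one-parameter family $t\mapsto K_{te_k}$ of kernels in $H_G(\mathbb{R}^n)$ (with $e_k$ the $k$th standard basis vector), which is a smooth curve because $\langle K_{te_k},K_{t'e_k}\rangle=\exp(-\tfrac12(t-t')^2)$ depends smoothly on $t,t'$. Applying the bounded operator $\mathcal{K}_F$ shows $t\mapsto \mathcal{K}_F K_{te_k}$ is differentiable in $H_G(\mathbb{R}^n)$, and its derivative at $t=0$ extends to an entire function $W_k\in H_G(\mathbb{C}^n)$ that agrees with $\Psi_0\cdot\tilde F_k$ on $U$. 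Hence $\tilde F_k=W_k/\Psi_0$ extends to a meromorphic function on all of $\mathbb{C}^n$. To rule out poles I would argue by contradiction: along any hypersurface where $\tilde F_k$ blew up, the entire function $\Psi_{e_k}=\exp(-\tfrac12\|\tilde F-e_k\|^2)$ would acquire a non-removable (essential) singularity, which is impossible for a function holomorphic on all of $\mathbb{C}^n$. Thus each $\tilde F_k$ is entire and $F$ extends to an entire map $F:\mathbb{C}^n\to\mathbb{C}^n$.

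With entireness in hand the transfer of boundedness is clean. I would define $E$ on the span of the real-centered kernels $\{K_G(\cdot,y):y\in\mathbb{R}^n\}$ by sending each such kernel, viewed in the complexification of $H_G(\mathbb{R}^n)$, to the complex kernel $K_G(\cdot,y)\in H_G(\mathbb{C}^n)$. Because $(y-y')^2=\|y-y'\|^2$ for real $y,y'$, the two Gram matrices coincide, so $E$ preserves inner products, and density of the real-centered kernels in $H_G(\mathbb{C}^n)$ follows from the identity theorem, since any element of $H_G(\mathbb{C}^n)$ orthogonal to all of them vanishes on $\mathbb{R}^n$ and hence identically. Therefore $E$ extends to a surjective isometry, which is simply holomorphic extension of functions. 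Finally, since $F$ maps $\mathbb{R}^n$ into $\mathbb{R}^n$, for $f\in H_G(\mathbb{R}^n)$ the entire functions $E(f\circ F)$ and $(Ef)\circ F$ agree on $\mathbb{R}^n$ and hence coincide, which is exactly the intertwining $\mathcal{K}_F^{\mathbb{C}}E=E\mathcal{K}_F$ with the composition operator $\mathcal{K}_F^{\mathbb{C}}$ on $H_G(\mathbb{C}^n)$. As $\mathcal{K}_F$ is bounded and, by Lemma \ref{lem:denselydefinedkoopmanproperty}, closed and therefore everywhere defined, conjugating by the unitary $E$ shows that $\mathcal{K}_F^{\mathbb{C}}$ is everywhere defined and bounded on $H_G(\mathbb{C}^n)$ with the same operator norm, completing the proof.
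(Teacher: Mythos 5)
Your argument is correct in substance but follows a genuinely different, and more thorough, route than the paper. The paper's proof is a two-line reduction: it invokes the standard criterion that $\mathcal{K}_F$ is bounded on a RKHS with norm at most $C$ if and only if $C^2 K_G(z,w)-K_G(F(z),F(w))$ is a positive kernel, notes that the complex Gaussians with real centers are dense in $H_G(\mathbb{C}^n)$ (the same identity-theorem fact you use), and concludes that positivity need only be checked at real centers, where it is precisely boundedness over the real native space. Your second paragraph reaches the same conclusion by constructing the unitary $E$ of holomorphic extension and intertwining $\mathcal{K}_F^{\mathbb{C}}E=E\mathcal{K}_F$; this is equivalent in content (positivity of $C^2K_G - K_G\circ(F\times F)$ on real centers is exactly the Gram-matrix inequality your unitary transports), so it buys nothing new but is a valid and arguably more explicit packaging. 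Where you genuinely depart from the paper is your first paragraph: the paper never addresses why the real-analytic $F$ produced by the preceding lemma should extend to an \emph{entire} map at all---real analyticity alone does not give this, the statement simply presupposes ``its extension to an entire function,'' and the subsequent Corollary restricts to real \emph{entire} $F$ to sidestep the question. Your device of differentiating $t\mapsto\mathcal{K}_F K_{te_k}$ in norm to exhibit $F_k\cdot\exp(-\tfrac12\|F(\cdot)\|^2)$ as an element of the native space, hence the restriction of an entire function, and then dividing by $\Psi_0$, is a real contribution that closes this gap. The one step to tighten is the pole exclusion: on the domain where $\tilde F$ is holomorphic one has $\Psi_{e_k}=e^{-1/2}\,\Psi_0\,e^{\tilde F_k}$, so at a generic smooth point of a putative polar hypersurface of $\tilde F_k=W_k/\Psi_0$ the right-hand side is a finite-order zero multiplied by an essential singularity, which cannot agree with the entire function $\Psi_{e_k}$; stating it this way makes your Casorati--Weierstrass appeal precise.
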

\begin{proof}
Since an entire function on $\mathbb{C}^n$ is uniquely determined by its restriction to $\mathbb{R}^n$, it follows that the span of the complex valued Gaussian RBFs with centers in $\mathbb{R}^n$ is dense in $H_G$. Moreover, to demonstrate that $\mathcal{K}_{F}$ is bounded, it suffices to show that there is a constant $C$ such that
\begin{equation}\label{eq:postivedefbounded}C^2 K_G(z,w) - K_G(F(z),F(w))\end{equation} is a positive kernel. By the above remark, it suffices to show this for real $x,y \in \mathbb{R}^n$, but then this is equivalent to the statement that $\mathcal{K}_F$ is bounded over the Gaussian RBF's native space over $\mathbb{R}^n$. 
\end{proof}
\begin{theorem}
If $F:\mathbb{C}^n \to \mathbb{C}^n$ is an entire function, and $\mathcal{K}_F$ is bounded on $H_G$, then $F(z) = Az + b$ for a matrix $A \in \mathbb{C}^{n\times n}$ and vector $b \in \mathbb{C}^n$.
\end{theorem}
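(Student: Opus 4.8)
The plan is to combine the positive-kernel criterion from the preceding lemma with a growth estimate on the diagonal and a classical Phragm\'en--Lindel\"of-type argument from one-variable complex analysis. First I would record the direction of the criterion that I actually need: if $\mathcal{K}_F$ is bounded with $\|\mathcal{K}_F\| = C$, then $C^2 K_G(z,w) - K_G(F(z),F(w))$ is a positive kernel. This is the converse of the implication used in the preceding lemma, and it follows from the identity $\mathcal{K}_F^* K_w = K_{F(w)}$ together with the elementary estimate $\sum_{j,k} c_j\overline{c_k}\, K_G(F(w_j),F(w_k)) = \|\mathcal{K}_F^*\sum_j \overline{c_j} K_{w_j}\|^2 \le C^2 \sum_{j,k} c_j \overline{c_k}\, K_G(w_j,w_k)$ valid for every finite set of centers $w_j$ and coefficients $c_j$.

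Next I would extract the essential analytic content by restricting this positive kernel to the diagonal $w = z$, where positivity forces each diagonal entry to be nonnegative. Using $K_G(z,z) = \exp(2\|\operatorname{Im} z\|^2)$, which is immediate from $K_G(z,w) = \exp(-\tfrac12\sum_j (z_j - \overline{w_j})^2)$ and $z_j - \overline{z_j} = 2i\operatorname{Im} z_j$, the diagonal inequality $C^2 K_G(z,z) \ge K_G(F(z),F(z))$ becomes, after taking logarithms, $\|\operatorname{Im} F(z)\|^2 \le \|\operatorname{Im} z\|^2 + \log C$ for all $z \in \mathbb{C}^n$. In particular each coordinate satisfies the linear bound $|\operatorname{Im} F_k(z)| \le \|z\| + c$ for a constant $c$ depending only on $C$.

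The heart of the proof, and the step I expect to be the main obstacle, is upgrading this bound on the imaginary part of $F_k$ to the conclusion that $F_k$ is affine, since the naive maximum-modulus principle gives no control of $|F_k|$ from control of $\operatorname{Im} F_k$ alone. I would do this one complex line at a time: for a unit vector $u \in \mathbb{C}^n$, the entire function $\zeta \mapsto g(\zeta) := iF_k(\zeta u)$ has $\operatorname{Re} g(\zeta) = -\operatorname{Im} F_k(\zeta u) \le |\zeta| + c$, so the Borel--Carath\'eodory theorem (applied on the disks of radius $2R$ and $R$) yields $\max_{|\zeta|\le R}|g(\zeta)| = O(R)$ with constants independent of the direction $u$, by rotation invariance of the bound. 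Evaluating at $\zeta = \|z\|$ and $u = z/\|z\|$ gives the global estimate $|F_k(z)| \le A\|z\| + B$. The several-variable generalized Liouville theorem, via the Cauchy estimates $|a_\alpha| \le M(R)/R^{|\alpha|}$ on the Taylor coefficients, which tend to $0$ for $|\alpha| \ge 2$ once $M(R) = O(R)$, then shows each $F_k$ is a polynomial of total degree at most one; assembling the coordinates yields $F(z) = Az + b$.

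I would stress that only the diagonal of the positive kernel is used, so the off-diagonal information, and hence the precise admissible pairs $(A,b)$ that full positivity would impose, is irrelevant to the structural conclusion. The delicate point throughout is the passage from control of $\operatorname{Im} F_k$ to control of $|F_k|$, which is exactly why Borel--Carath\'eodory is indispensable and why the reduction to a single complex variable must be carried out uniformly in the direction $u$.
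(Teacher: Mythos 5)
The proposal is correct and follows essentially the same route as the paper: your diagonal restriction of the positive kernel $C^2K_G(z,w)-K_G(F(z),F(w))$ is literally the paper's inequality $\|K_G(\cdot,F(z))\|^2\le C^2\|K_G(\cdot,z)\|^2$, and both arguments then take logarithms to obtain linear growth of $\operatorname{Im}F_j$. The only divergence is in the final Liouville-type step --- you use Borel--Carath\'eodory on complex lines followed by the holomorphic Liouville theorem, whereas the paper applies the harmonic Liouville theorem (via Cauchy estimates) to the linearly growing harmonic function $\operatorname{Im}F_j$ and then passes to its harmonic conjugate --- and both are standard and valid, with yours spelling out in more detail the one delicate passage from control of $\operatorname{Im}F_j$ to control of $F_j$ itself.
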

\begin{proof}
If $\mathcal{K}_F$ is bounded, then it has a bounded adjoint, $\mathcal{K}_F^*$, which acts on the complex Gaussian as $\mathcal{K}_F^* K_G(\cdot,z) = K_G(\cdot,F(z))$. In particular, there is a constant $C > 0$ such that $\| K_G(\cdot,F(z)) \|^2 \le C^2 \| K_G(\cdot,z) \|^2$. Noting the identity $\|K_G(\cdot,z)\|^2 = \exp\left(2 \sum_{j=1}^n (\mathcal{I}z_j)^2\right)$ and taking the logarithm, it follows that
\begin{equation}
    \sum_{j=1}^n (\mathcal{I}F_j(z))^2 \le \log(C^2) + \sum_{j=1}^n (\mathcal I z_j)^2 \le \log(C^2) + \|z\|^2.
\end{equation}
From this inequality, it follows that for each coordinate $j=1,\ldots,n$, the harmonic function $v_j(z) = \mathcal{I}F_j(z)$ has linear growth. That is, there is a constant $\tilde C$ so that $|v_j(z)| \le \tilde C (1 + \| z \|)$ for all $z \in \mathbb{C}^n$. It follows (e.g. from the standard Cauchy estimates) that $v_j(z) = v_j(x+iy)$ must be an affine linear function of $x$ and $y$, and therefore, so must its harmonic conjugate $u_j(z)$, and we conclude that $F(z) = Az + b$.
\end{proof}
\begin{corollary}
    If $F$ is a real entire vector valued function, and $\mathcal{K}_F$ is bounded on the Gaussian RBF's native space over $\mathbb{R}^n$, then $F$ is affine.
\end{corollary}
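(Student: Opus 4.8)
The plan is to chain together the Lemma immediately preceding the Theorem and the Theorem itself, since the corollary is precisely their composition once the real-entire hypothesis is in place. First I would note that, because $F$ is assumed to be a real \emph{entire} vector valued function, it is in particular real analytic on $\mathbb{R}^n$, and its entire extension to $\mathbb{C}^n$ is $F$ itself; there is no gap between a local real-analytic continuation and a genuine entire extension, which is exactly what makes the corollary cleaner than the general real-analytic situation. Thus the hypotheses of the preceding Lemma are met verbatim: $F$ is real analytic and, by assumption, $\mathcal{K}_F$ is bounded over the Gaussian RBF's native space over $\mathbb{R}^n$. Note that the first of the two boundedness Lemmas is not needed here, since it only establishes real analyticity, which is subsumed by the entireness already assumed.

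Applying the preceding Lemma, the entire extension $F:\mathbb{C}^n \to \mathbb{C}^n$ induces a bounded operator $\mathcal{K}_F$ over $H_G(\mathbb{C}^n)$. At this stage the hypotheses of the Theorem hold exactly as stated, an entire $F$ with $\mathcal{K}_F$ bounded on $H_G$, so the Theorem yields $F(z) = Az + b$ for some $A \in \mathbb{C}^{n\times n}$ and $b \in \mathbb{C}^n$. Restricting to $\mathbb{R}^n$ shows that $F$ is affine. If one additionally wants $A$ and $b$ to be real, this follows by evaluating $F(0) = b \in \mathbb{R}^n$ and then reading off the columns of $A$ from the real vectors $F(e_k) - b$ at the standard basis elements $e_k$, each of which is real because $F$ is real on $\mathbb{R}^n$; however, the statement as written requires only affineness.

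I do not expect a genuine obstacle, since all of the analytic content has already been spent in the Theorem (the harmonic-growth estimate on $\mathcal{I}F_j$ together with the Cauchy estimates forcing affineness) and in the two preceding Lemmas. The one point that warrants care is confirming that the real-entire hypothesis feeds cleanly into the preceding Lemma, that is, that ``real entire'' simultaneously supplies the real analyticity and the honest entire extension consumed by the Lemma and the Theorem, so that the corollary is a direct application rather than requiring any re-derivation of analyticity.
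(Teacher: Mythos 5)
Your proposal is correct and matches the paper's (implicit) argument: the corollary is stated without proof precisely because it is the direct composition of the preceding Lemma (the entire extension induces a bounded operator on $H_G(\mathbb{C}^n)$) with the Theorem (bounded on $H_G$ forces $F(z)=Az+b$), followed by restriction to $\mathbb{R}^n$. Your observation that the real-entire hypothesis is what licenses the passage to the entire extension required by the Lemma is exactly the right point of care.
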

Hence, for the most commonly used kernel function in machine learning, the collection of bounded (and hence compact) Koopman operators over its native space is restricted to only those Koopman operators corresponding to affine dynamics. Each selection of RKHS and kernel function will yield a correspondingly small collection of bounded Koopman operators. It should be noted that Koopman operators were completely classified for over the classical sampling space, the Paley-Wiener space \citep{chacon2007composition}, as also being those that correspond to affine dynamics, and it is a simple exercise to show that the native space for the polynomial kernel also only admits bounded Koopman operator when the dynamics are affine.

Consequently, in most practical respects Koopman operators over RKHSs should not be assumed to be bounded, and certainly not compact.

\section{Dynamic Mode Decomposition with Koopman Operators over RKHSs\label{sec:algorithm}}

As a product of its genesis in the machine learning community, many DMD procedures appeal to feature space, and this continues to hold in the current implementations of kernel-based extended DMD \citep{williams2014kernel}, which casts the snapshots from a finite dimensional nonlinear system into an infinite feature space. The direct involvement of the feature space in the estimation of the Koopman operator leads to rather complicated numerical machinery. To avoid directly computing the infinite dimensional vectors that result, an involved collection of linear algebra techniques are leveraged to extract the Koopman modes. Here it is shown that this process may be simplified and that a procedure that directly involves the kernel functions centered at the snapshots simplifies the design of DMD algorithms. This approach keeps to the spirit of the ``kernel trick,'' where feature vectors are never  directly evaluated and only accessed through evaluations of the kernel function itself.

Throughout this algorithm, a Koopman operator will be assumed to be densely defined, as Section \ref{sec:misconceptions} demonstrated that most Koopman operators cannot be expected to be bounded or compact. An additional assumption will be made that the kernel functions themselves reside in the domain of the Koopman operator. It should be noted that since the kernels are always in the domain of the adjoint of the Koopman operator (see Section \ref{sec:koopmanoperators}), a finite rank representation of the adjoint of the Koopman operator may thus be derived without assuming that the kernels are in the domain of the Koopman operator.

For the sake of the derivation, it is also assumed that the Koopman operator is diagonalizable, which is not generally expected to be true. However, the finite rank representations leveraged in this manuscript are almost always diagonalizable, since the set of non-diagonalizable matrices are of measure zero in the collection of all matrices. Moreover, for periodic data sets, the adjoint of the Koopman operator will be invariant on the span of the collection of kernel functions centered at the snapshots, and thus, the finite rank representations will be explicitly the adjoint of the Koopman operator on that subspace, which supports the assumption of the availability of eigendecompositions for the Koopman operator in the periodic or quasiperiodic settings.

For a given collection of snapshots $\{ x_1,x_2,...,x_m \}$\footnote{While availability of a time series of snapshots $\{ x_1,x_2,...,x_m \}$ such that $ x_{i+1} = F(x_i) $ is a more typical use case, the developed method does not require such a time series. It can also be implemented using arbitrary snapshots $\{ x_1,x_2,...,x_m \}$ and $\{ y_1,y_2,...,y_m \}$ provided $ y_i = F(x_i)$. }, the goal is to determine a finite rank representation of $\mathcal{K}_F$ that is derived from the kernel functions centered at the snapshots. To express a finite rank representation, the ordered basis $\alpha = \{ k_{x_1},...,k_{x_{m-1}} \}$ is leveraged. In particular, if $P_{\alpha}$ is the projection on to $\vspan(\alpha)$, the operator $P_\alpha \mathcal{K}_{F}$ maps $\vspan(\alpha)$ to itself, which enables the discussion of eigenfunctions and eigenvalues of $\mathcal{K}_{F}$ using only functions in $\vspan(\alpha)$.

Suppose that given a function $g\in\vspan{\alpha}$, with coefficients $ a_1,...,a_{m-1}$, the function $P_\alpha \mathcal{K}_{F}g$ can be expressed in the basis $\alpha$ using the coefficients $b_1,...,b_{m-1}$. As a result, the operator $P_\alpha \mathcal{K}_{F}$ can be represented using a matrix that maps the vector $\left(a_1,...,a_{m-1}\right)^T$ to the vector $\left(b_1,...,b_{m-1}\right)^T$. In the following development, the finite rank representation of $P_\alpha \mathcal{K}_{F}$, expressed in a matrix form, is denoted by $[P_{\alpha} \mathcal{K}_F]_\alpha^\alpha$, where the notation $[\cdot]_\alpha^\alpha$ indicates that both the domain and range of $P_\alpha \mathcal{K}_{F}$ is restricted to $\vspan(\alpha)$.

Consequently, the $i$th column of $[P_{\alpha} \mathcal{K}_F]_\alpha^\alpha$ may be determined through the examination of the action of the operator $P_{\alpha} \mathcal{K}_F$ on the basis function $K_{x_i}$, for
$i = 1,\ldots,m-1.$ Using the fact that $\mathcal{K}_F K_{x_i}(x) = K_{x_{i}}(F(x))$, $[P_{\alpha} \mathcal{K}_F]_\alpha^\alpha$ may be written as
\begin{gather*}
[P_{\alpha} \mathcal{K}_F]_\alpha^\alpha = \begin{pmatrix} K(x_1,x_1) & \cdots & K(x_1,x_{m-1})\\ \vdots & \ddots & \vdots\\ K(x_{m-1},x_1) & \cdots & K(x_{m-1},x_{m-1})\end{pmatrix}^{-1}
\begin{pmatrix} K(x_2,x_1) & \cdots & K(x_2,x_{m-1})\\ \vdots & \ddots & \vdots\\ K(x_{m},x_1) & \cdots & K(x_{m},x_{m-1})\end{pmatrix}.
\end{gather*}
It should be noted that this is precisely the pair of matrices examined in \citep{williams2014kernel} after the use of a truncated SVD.

The objective of DMD is to use the finite rank representation determined above to create a data driven model of the dynamical system. This makes use of a fundamental property of eigenfunctions of the Koopman operator. In particular, suppose that $\varphi$ is an eigenfunction of $\mathcal{K}_F$ with eigenvalue $\lambda$. Evaluating the eigenfunction at a snapshot reveals $\varphi(x_{i+1}) = \varphi(F(x_i)) = \mathcal{K}_F \varphi(x_i)=\lambda \varphi(x_i)$. Hence, $\varphi(x_{i+1}) = \lambda^i \varphi(x_1)$. Now suppose that $\{ \varphi_{j} \}_{j=1}^\infty$ is an eigenbasis for a diagonalizable Koopman operator, $\mathcal{K}_F$, corresponding to the eigenvalues $\{\lambda_j\}_{j=1}^\infty$. For a state $x \in \mathbb{R}^n$, let $(x)_d$ be the $d$-th component of $x$ for $d=1,\ldots,n$. If it is assumed that the mapping $x \mapsto (x)_d$ is in the RKHS (as it is when $H$ is the native space for the exponential dot product space \citep{steinwart2008support}), then it may be expressed as $(x)_d = \lim_{M \to \infty} \sum_{j=1}^M (\xi_{j,M})_{d} \varphi_i(x)$ for some coefficients $\left\{(\xi_{j,M})_{d}\right\}_{j=1}^{\infty}$. Note that since the Koopman operator is not generally a normal operator, $\{ \varphi_i \}_{i=1}^\infty$ is not expected to be an orthonormal basis, and hence, there may be nonzero influences between the coefficients obtained by projection and this is expressed by the additional index $M$ in $\xi_{j,M}$. This means that a series representation of the decomposition as expressed in \cite{kawahara2016dynamic,brunton2019data} is not always possible. \textit{Hence, Koopman modes are not fixed quantities unless there is an orthonormal basis of eigenfunctions for the Koopman operator.} By stacking each $(x)_d$, the full state observable $g_{id}$, given by $g_{id}(x) = x$, is expressed as
\begin{equation}g_{id}(x)=\lim_{M\to\infty} \sum_{j=1}^M \xi_{j,M}\varphi_j(x).\label{eq:gidProjection}\end{equation} Hence, each snapshot may be reconstructed as
\begin{equation}x_{i+1} = \lim_{M\to\infty} \sum_{j=1}^M \xi_{j,M} \lambda_j^i\varphi_j(x_1).\end{equation}

Since the Koopman operator is approximated here by a finite rank representation, perfect reproduction of $g_{id}$ through a series of eigenfunctions is not possible. Instead, eigenfunctions determined through the finite rank representation are used to construct the approximation of $g_{id}$. In particular, the matrix $[P_{\alpha} \mathcal{K}_F]_{\alpha}^\alpha$ is the matrix representation of $P_{\alpha} \mathcal{K}_F$. If $v_j$ is an eigenvector for the matrix $[P_{\alpha} \mathcal{K}_F]_{\alpha}^\alpha$ with eigenvalue $\lambda_j$, then
\begin{gather*}
    P_{\alpha} \mathcal{K}_F \left(\sum_{i=1}^{m-1} (v_j)_i K(x,x_i)\right) = \begin{pmatrix} K(x,x_1)\\ \vdots\\ K(x,x_{m-1}) \end{pmatrix}^T [P_{\alpha} \mathcal{K}_F]_{\alpha}^\alpha v_j = \lambda_j\sum_{i=1}^{m-1} (v_j)_i K(x,x_i).
\end{gather*}
That is, $\sum_{i=1}^{m-1} (v_j)_i K(x,x_i)$ is an eigenfunction of $P_{\alpha} \mathcal{K}_F$. The corresponding normalized eigenfunction is denoted by $\hat \varphi_j(x) := \frac{1}{\sqrt{v_j^T G v_j}} \sum_{i=1}^{m-1} (v_j)_i K(x,x_i),$
where $G = (K(x_i,x_\ell))_{i,\ell=1}^{m-1}$ is the Gram matrix associated with the snapshots and kernel function.

Using a finite rank representation of \eqref{eq:gidProjection}, it is easy to see that the $d$-th row of the matrix $\hat \xi := \begin{pmatrix}\hat{\xi}_1&\ldots&\hat{\xi}_{m-1}\end{pmatrix}$ of Koopman modes is comprised of the components of $ (x)_d $ along the (non-orthogonal) directions $\hat \varphi_j$. That is, $g_{id}(x_i) = x_i = \sum_{j=1}^{m-1} \xi_j \hat \varphi_j(x_i),$
which yields $\hat \xi = X\left(V^{T}G\right)^{-1},$
where $ X := \begin{pmatrix}x_1 & \ldots & x_{m-1}\end{pmatrix} $ is the data matrix and $$ V := \begin{pmatrix}\frac{v_1}{\sqrt{v_1^T G v_1}} & \ldots & \frac{v_{m-1}}{\sqrt{v_{m-1}^T G v_{m-1}}}\end{pmatrix} $$ is the matrix of normalized eigenvectors of $[P_{\alpha} \mathcal{K}_F]_{\alpha}^\alpha$.

Using the approximate eigenfunctions, $\hat \varphi_j$, a data driven model for the system is obtained as
\begin{equation}\label{eq:reconstruction}x_{i+1} \approx \sum_{j=1}^{m-1} \hat \xi_j  \lambda_j^i \hat\varphi_j(x_1).\end{equation}

\section{Vector Valued Considerations}

The attentive reader will notice that the ultimate objective of DMD is to achieve an approximation or decomposition of the function $g_{id}(x) := x$, the full state observable. However, the full state observable is $\mathbb{R}^n$ valued, whereas the RKHSs in question consist of scalar valued functions. Consequently, the coefficients that are determined to approximate the full state observable are vector valued. Up to now, the methods have simply separated the individual components of $g_{id}$ and established an approximation for each of them separately. The weights for these approximations are stacked to make the vector valued coefficients.

This section shows how the vector valued coefficients arise naturally from a projection onto vector valued functions in a vector valued RKHS. With the right selection of kernel operator, this projection operation reduces to the setting of Section \ref{sec:algorithm}.

A vector valued RKHS is a Hilbert space of functions from a set $X$ to a Hilbert space $\mathcal{Y}$, whose inner product will be denoted as $\langle \cdot, \cdot \rangle_{\mathcal{Y}}$, for which given any $v \in \mathcal{Y}$ and $x \in X$, the functional $g \mapsto \langle g(x), v \rangle_{\mathcal{Y}}$ is bounded. Hence, by the Reisz representation theorem, there exists a function $K_{x,v}$ in $H$ such that $\langle g, K_{x,v} \rangle_H = \langle g(x), v \rangle_{\mathcal{Y}}$ for all $g \in H$. It can be quickly seen that $K_{x,v}$ is linear in $v$, and hence, may be rewritten as $K_x v := K_{x,v}$, where $K_x : \mathcal{Y} \to H$.  The kernel operator $K(x,y)$ maps $(x, y) \in X \times X$ to an operator over $\mathcal{Y}$ as $K(x,y) := K_y^* K_x$. When $\mathcal{Y} = \mathbb{R}^n$, $K(x,y)$ may be taken to be a Hermitian matrix.

A Koopman operator over a vector valued RKHS behaves much like the Koopman operator in the scalar case. In particular, if $\mathcal{K}_F : \mathcal{D}(\mathcal{K}_F) \to H$ is a densely defined Koopman operator corresponding to the discrete dynamics $F:\mathbb{R}^n \to \mathbb{R}^n$ and $H$ is an $\mathbb{R}^n$ valued RKHS over $\mathbb{R}^n$, then $\mathcal{K}_F^* K_{x,v} = K_{F(x),v}$, which means that the Koopman operator advances the dynamics through the center of the kernel function.

Suppose that $\tilde K(x,y)$ corresponds to a scalar valued RKHS, $\tilde H$, and let $H$ be an $\mathbb{R}^n$ valued RKHS such that if $g \in H$, then $g = (g_1, \ldots, g_n)^T$ where $g_i \in \tilde H$ for each $i = 1,\ldots, n$ equipped with the inner product $\langle g, h \rangle_H = \sum_{i= 1}^n \langle g_i, h_i \rangle_{\tilde H}$. That $H$ is a vector valued RKHS follows since if $v \in \mathbb{R}^n$ and $x \in X$ then $|\langle g(x), v \rangle_{\mathbb{R}^n}| \le \| g(x) \|_{\mathbb{R}^n} \| v\|_{\mathbb{R}^n} = \sqrt{ \sum_{i=1}^\infty |\langle g_i,\tilde K(\cdot, x)\rangle_{\tilde{H}}|^2} \| v\|_{\mathbb{R}^n} \le \sqrt{\tilde K(x,x)} \sqrt{ \sum_{i=1}^\infty \| g_i \|_{\tilde H}^2} \| v\|_{\mathbb{R}^n} = \sqrt{\tilde K(x,x)} \| g\|_H \|v\|_{\mathbb{R}^n}$ , hence the functional $g \mapsto \langle g(x),v \rangle_{\mathbb{R}^n}$ is bounded. In this setting, $K(x,y) = \mathrm{diag}(\tilde K(x,y), \ldots, \tilde K(x,y))$, and $\langle K_{x} e_i, K_y e_j \rangle_{\mathbb{R}^n} = 0$ if $i\neq j$.

If $\{ x_0, \ldots, x_N \}$ is a collection of snapshots such that $F(x_i) = x_{i+1}$, then a finite rank approximation of $\mathcal{K}_F$ may be constructed by examining the image of the operator on $K_{x_i, e_j}$ for $i = 1,\ldots, N$ and $j=1,\ldots,n$, and then projecting back onto the span of these kernels. The projection operation requires the computation of the Gram matrix for the basis $\{ K_{x_i,e_j} \}$, which is a block diagonal matrix, where each block corresponds to a selection of dimension through $e_j$. Thus, if $\tilde G = ( \tilde K(x_s,x_\ell) )_{s,\ell = 1}^N$ the gram matrix manifests as

\[G = \begin{pmatrix} \tilde G & & \\ & \ddots & \\ & & \tilde G\end{pmatrix}.\]

Similarly, the interaction matrix is a block diagonal. Consequently, the vector of weights corresponding to each kernel function is composed of $n$ smaller vectors of length $N-1$, each one corresponding to a different dimension. Hence, each dimension may be treated independently. The eigenfunctions for this finite rank representation of the Koopman operator are then composed of $n$ identical collections of $N-1$ functions, differing only in the dimension they are supported in. Let $\tilde \varphi_1, \ldots, \tilde \varphi_N \in \tilde H$ be these scalar valued functions, and write $\varphi_{s,i} := \tilde \varphi_s e_i$ be the corresponding vector valued eigenfunction.

If the full state observable is projected onto this collection of eigenfunctions as $g_{id}(x) \approx \sum_{i=1}^n \sum_{s=1}^N w_{s,i} \phi_{s,i}(x) = \sum_{s=1}^N \tilde \phi_s(x) \left( \sum_{i=1}^n  w_{s,i} e_i \right).$ Here, $\sum_{i=1}^n w_{s,i} e_i = \xi_s$, where $\xi_s$ is the Koopman mode from the previous section.

\begin{figure}[ht]

    \begin{subfigure}[b]{0.45\textwidth}
    \centering

    \includegraphics[width=0.75in]{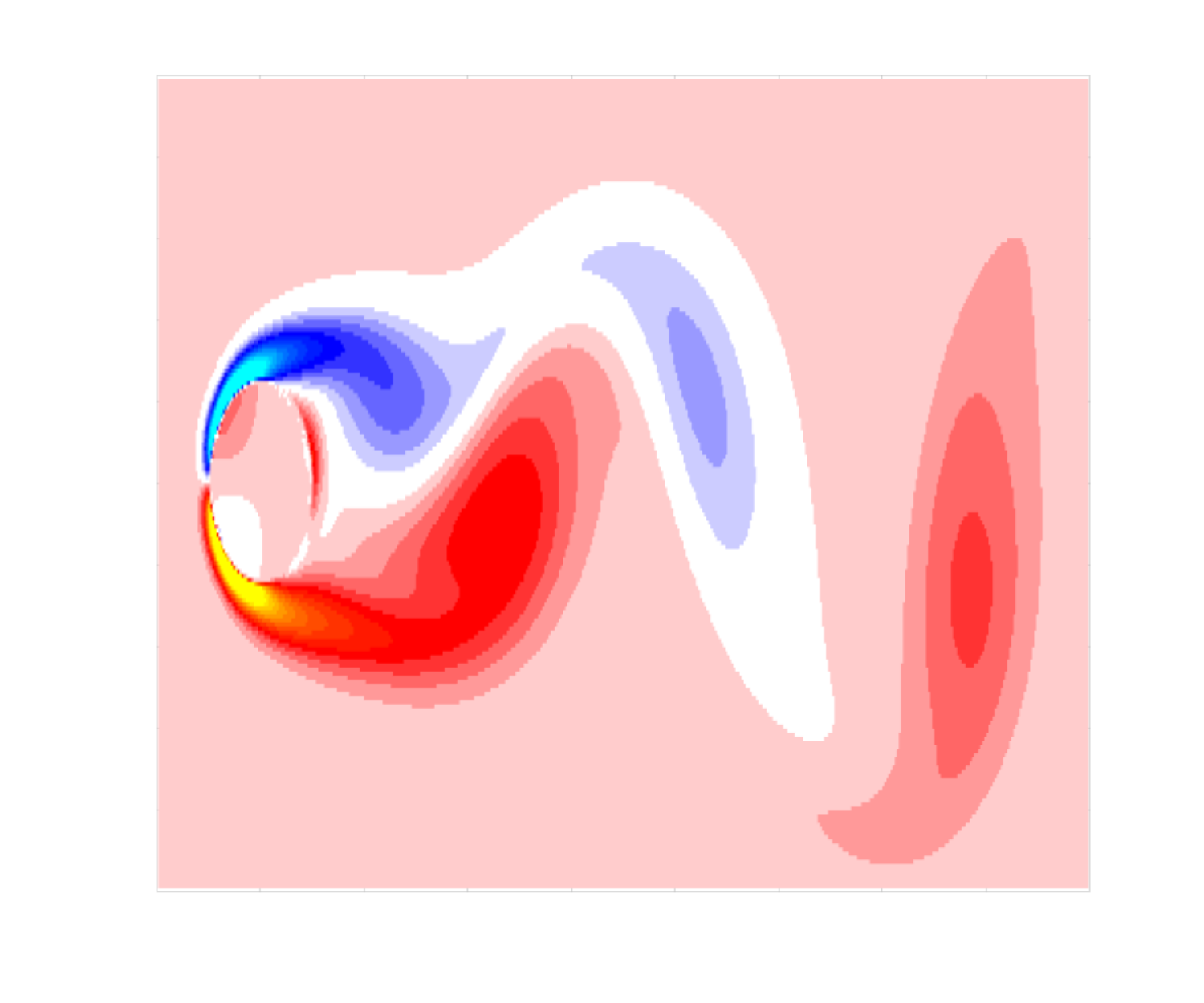}
    \includegraphics[width=0.75in]{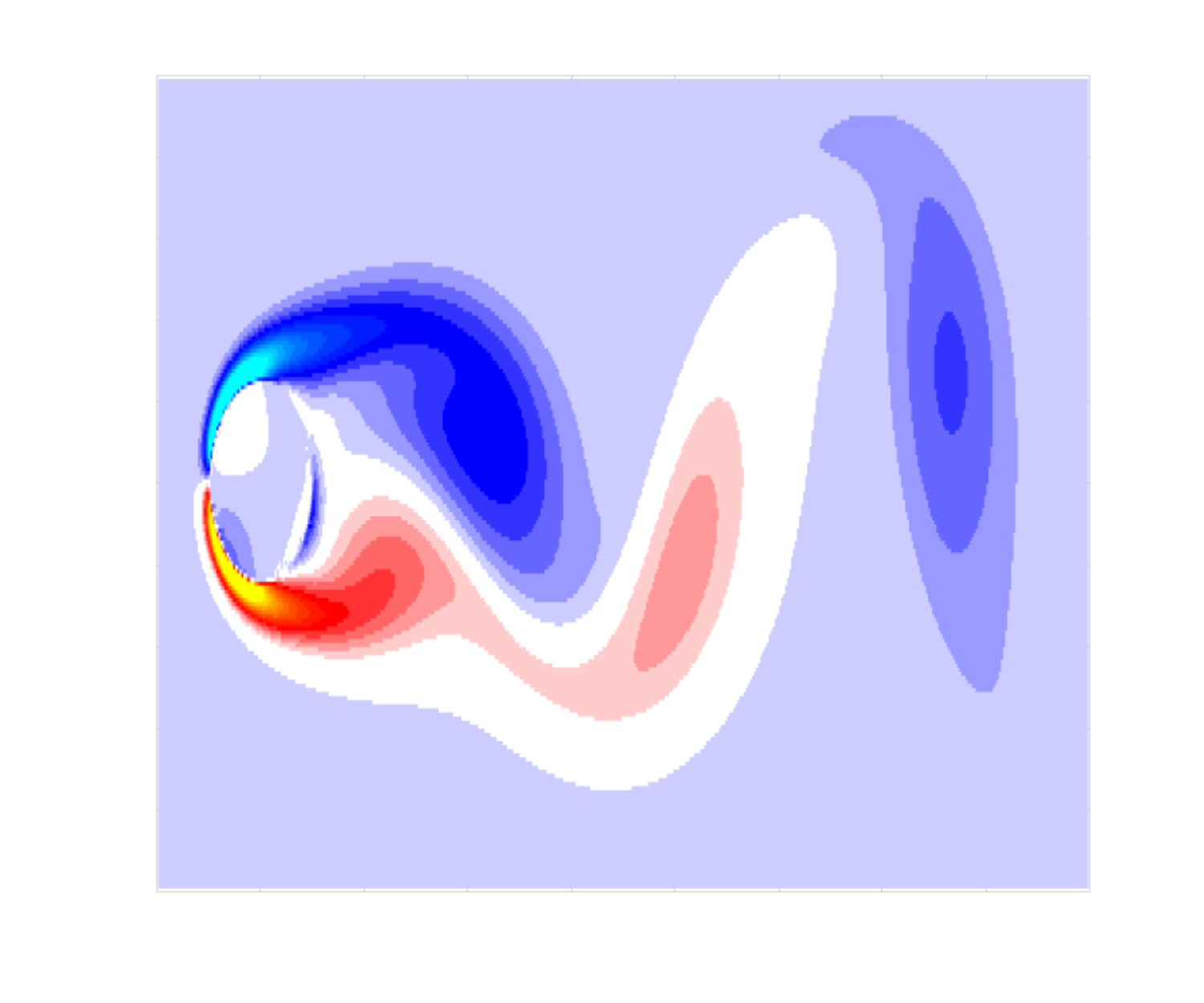}
    \includegraphics[width=0.75in]{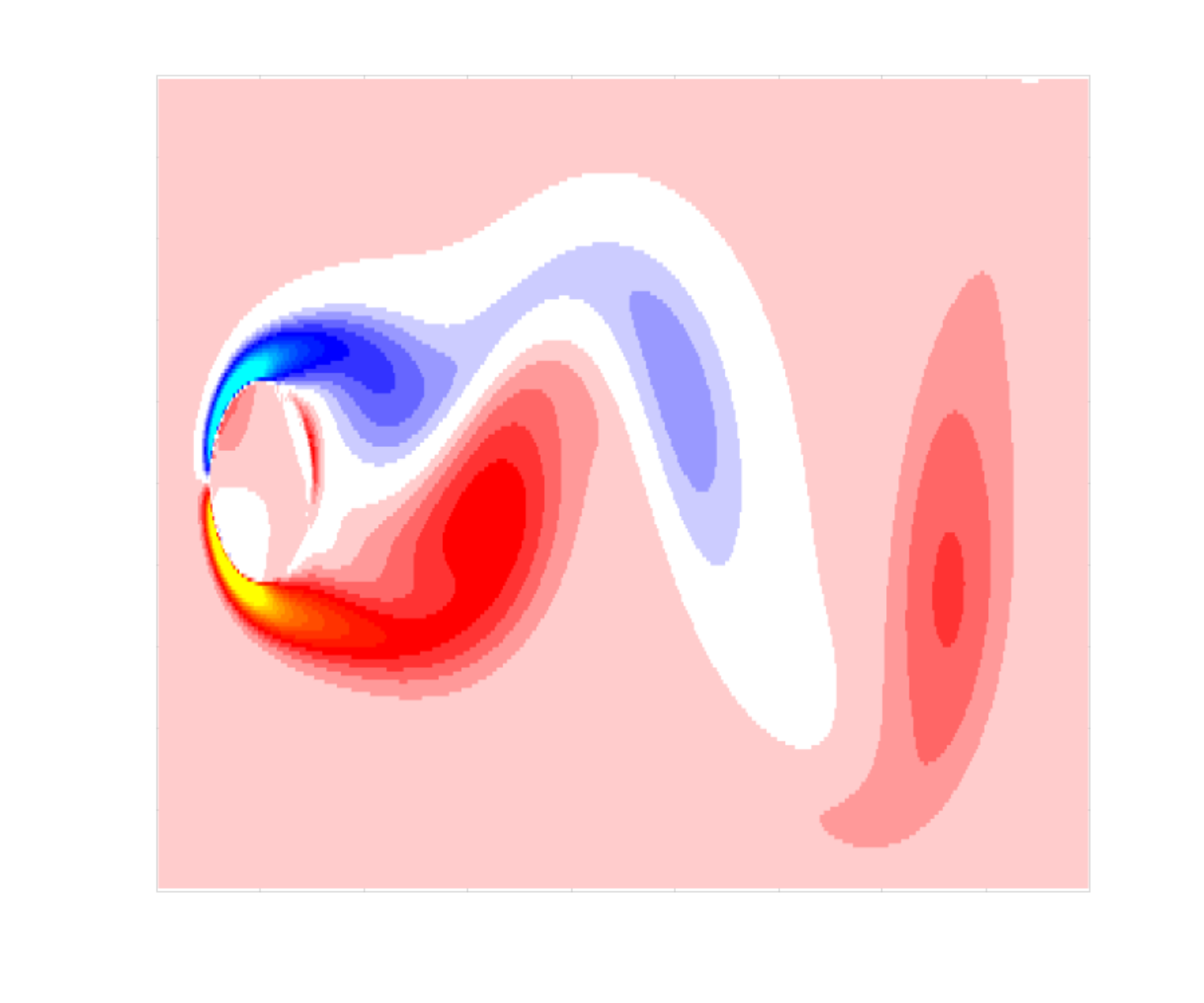}

    \includegraphics[width=0.75in]{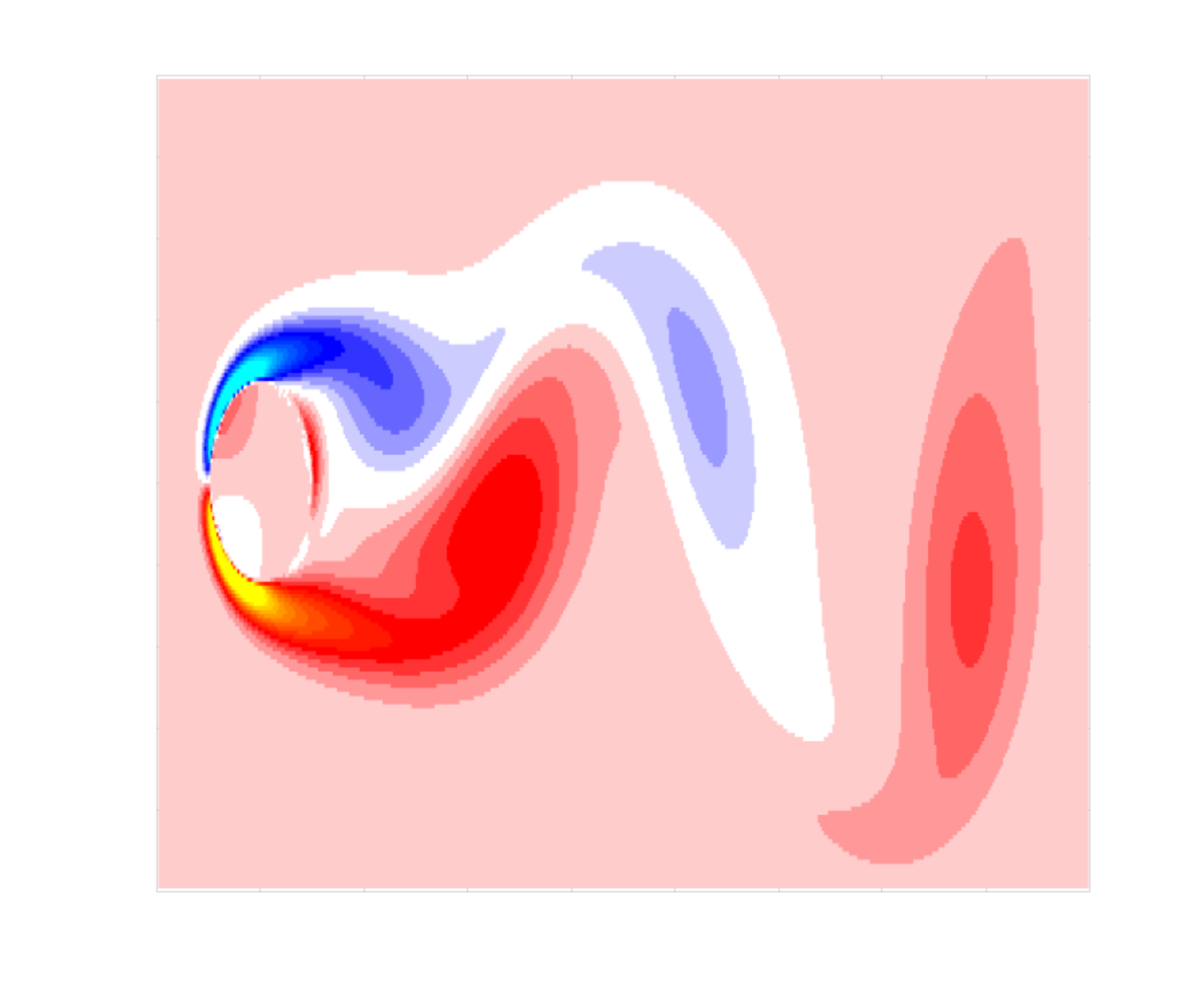}
    \includegraphics[width=0.75in]{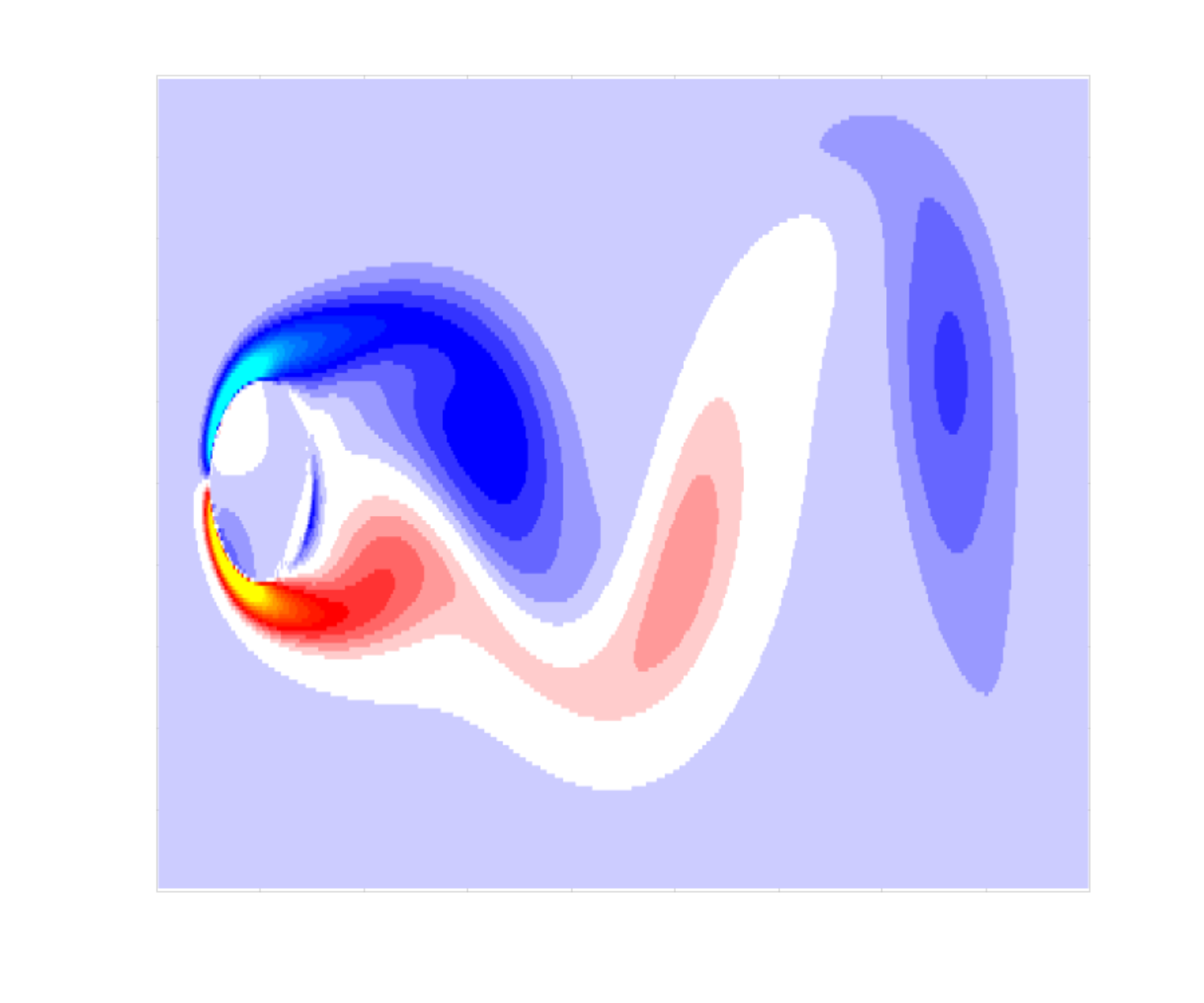}
    \includegraphics[width=0.75in]{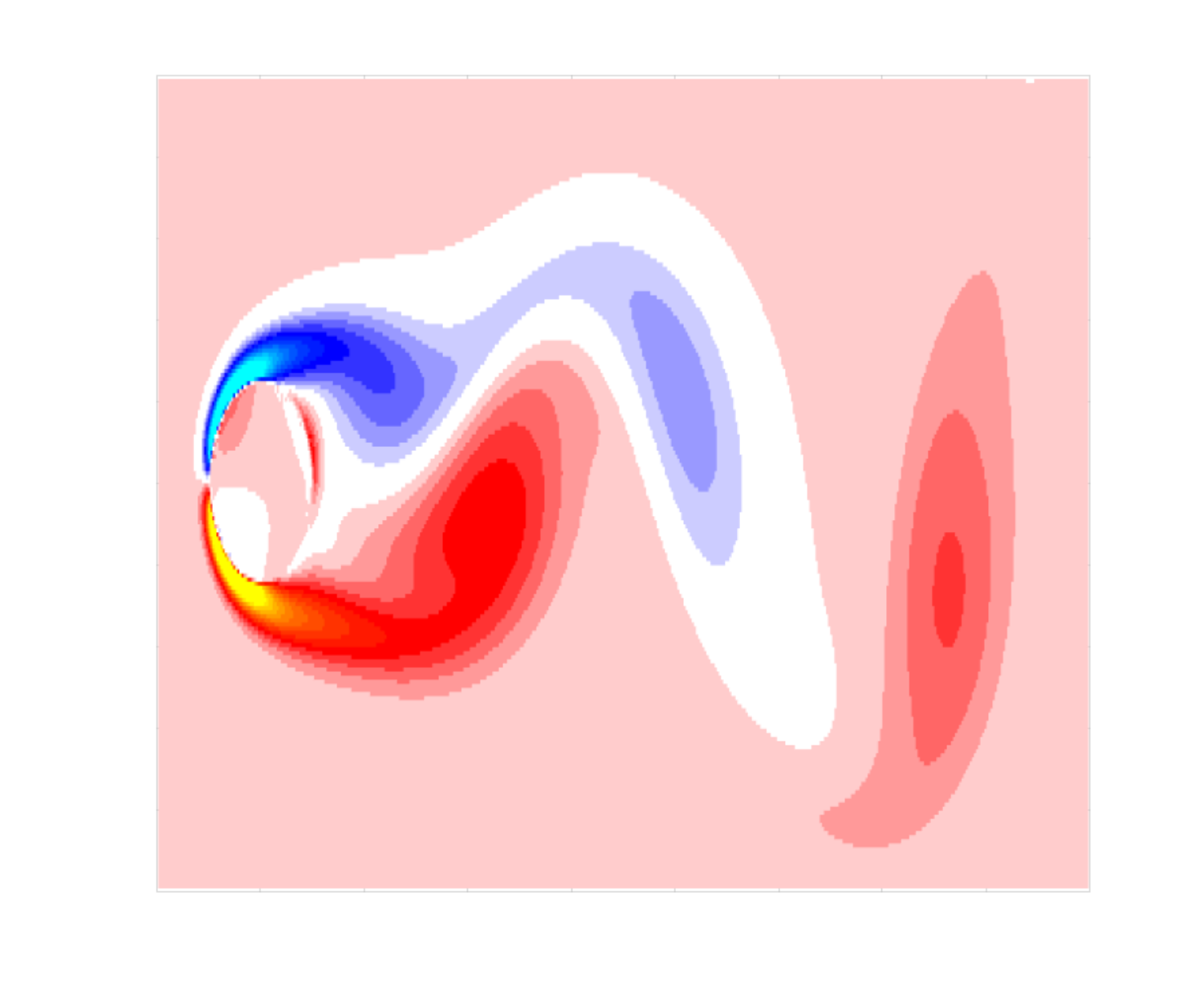}
    
    \includegraphics[width=0.75in]{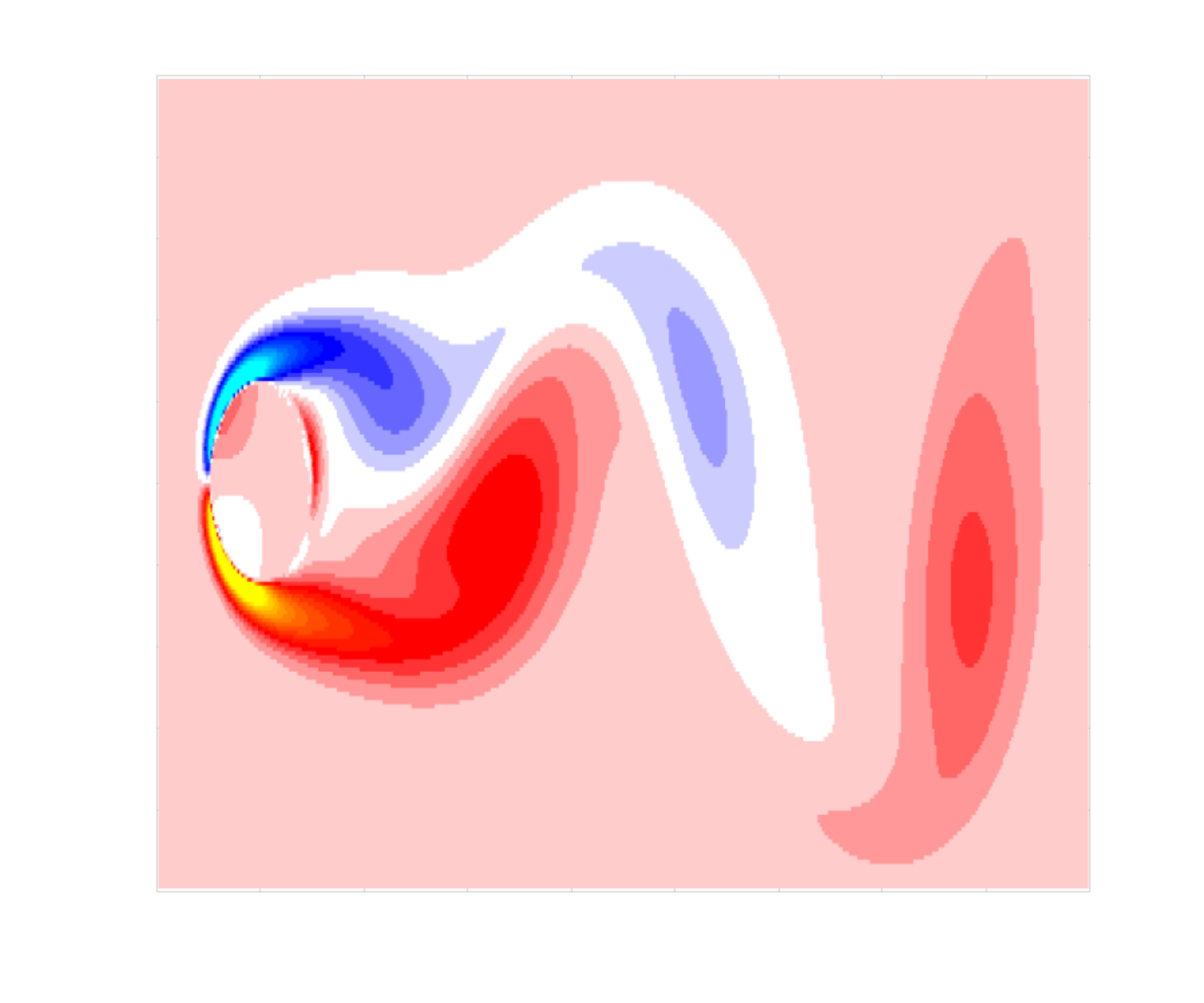}
    \includegraphics[width=0.75in]{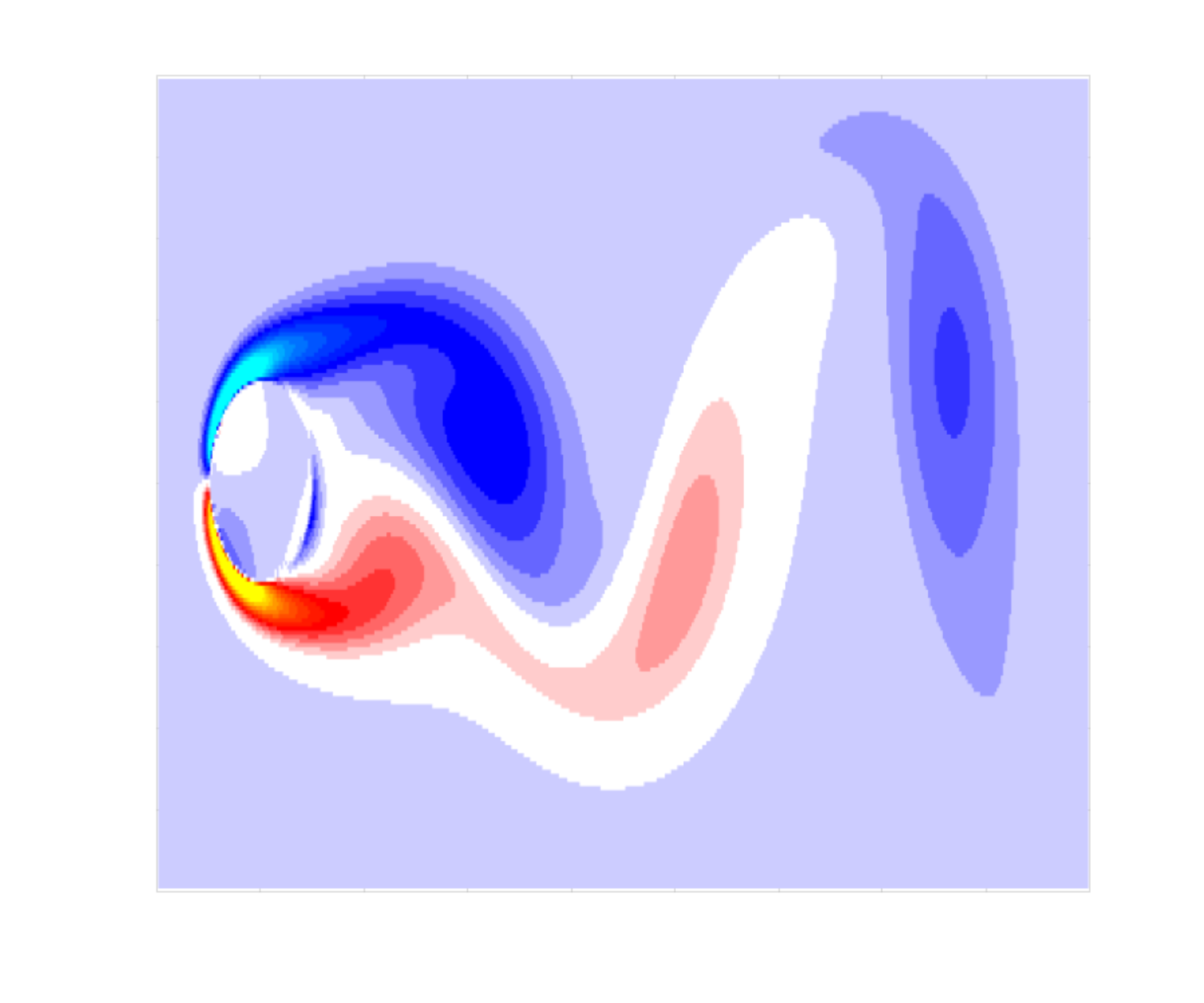}
    \includegraphics[width=0.75in]{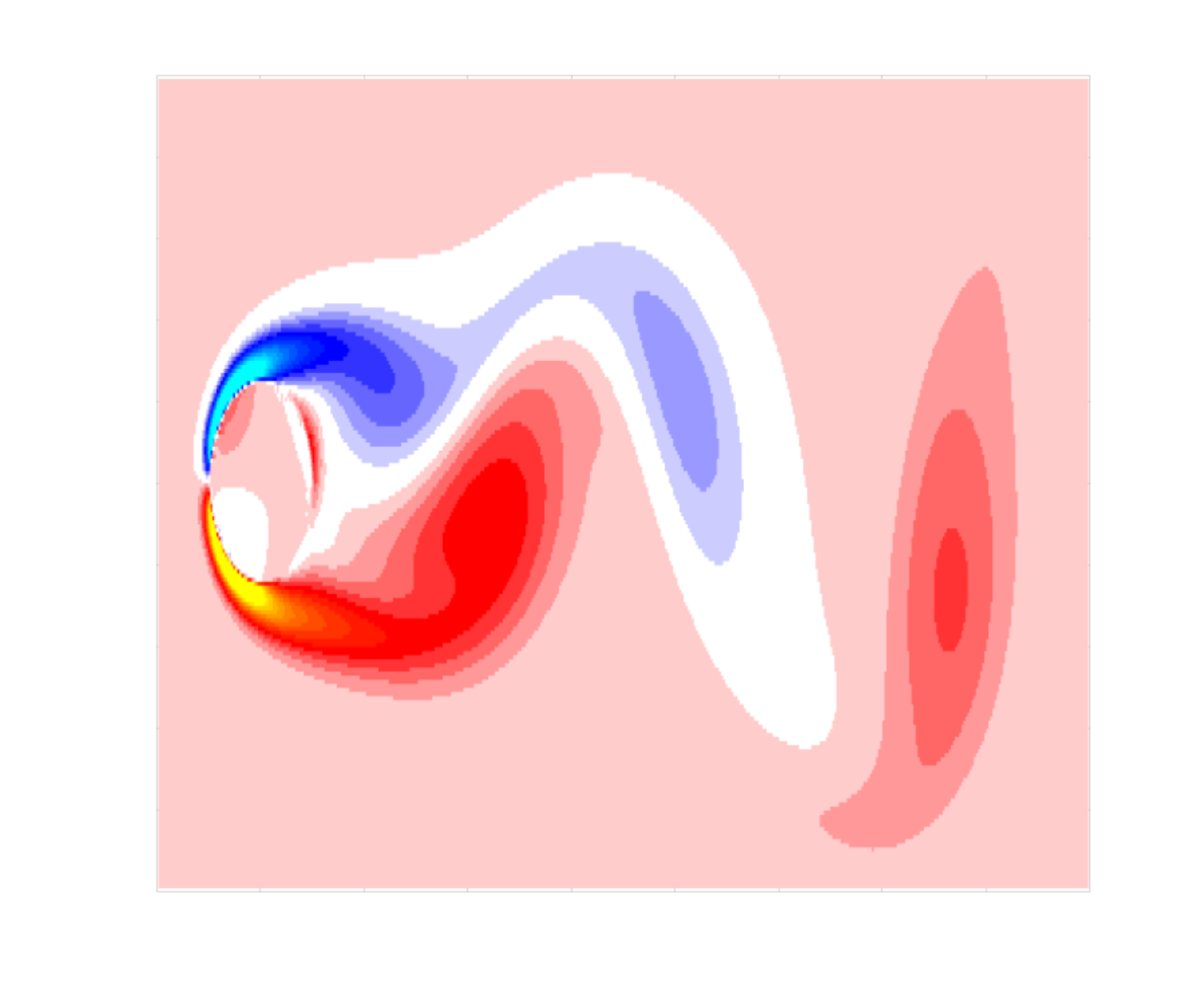}
    
    \caption{This figure presents the reconstruction of the vorticity field at the same time points using two different kernel functions. The left column presents the reconstruction of the initial state, $x_1$. The middle column shows the reconstructions of the state, $x_{15}$, and the right column corresponds to the reconstruction of the state $x_{30}$.}
    \label{fig:reconstruction}
    
    \end{subfigure}
\hfill
    \begin{subfigure}[b]{0.45\textwidth}
    \centering

    \includegraphics[width=0.75in]{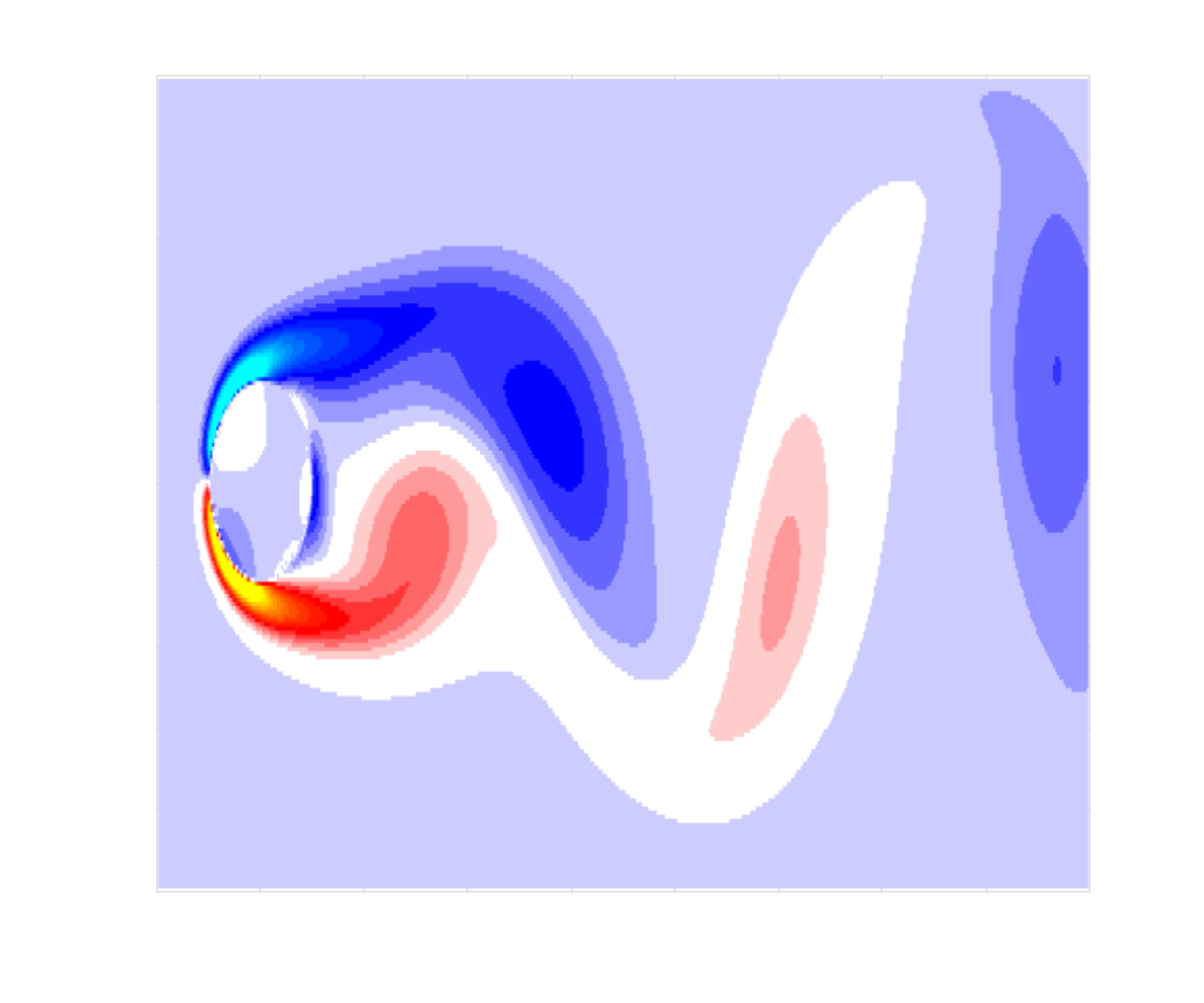}
    \includegraphics[width=0.75in]{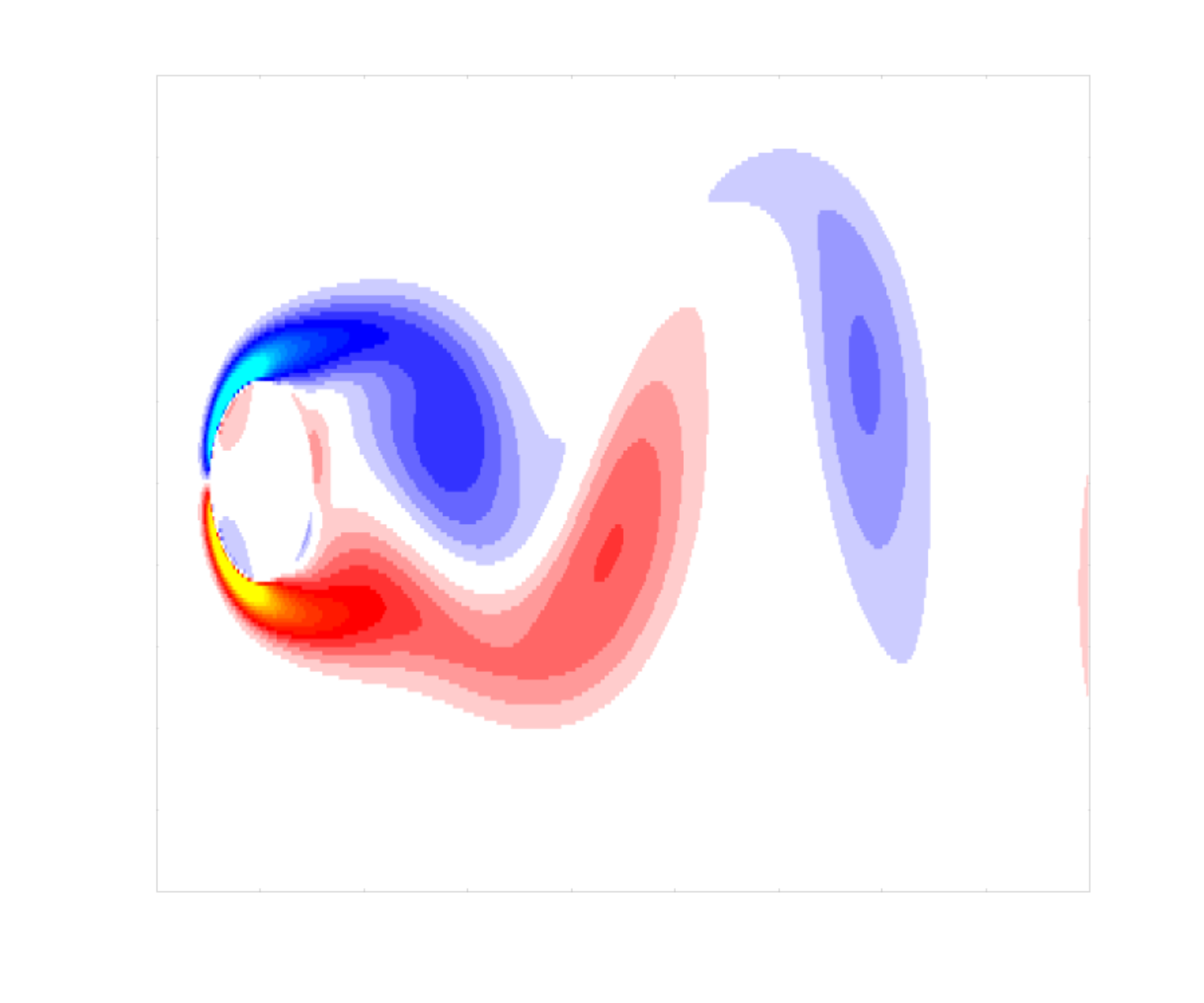}
    \includegraphics[width=0.75in]{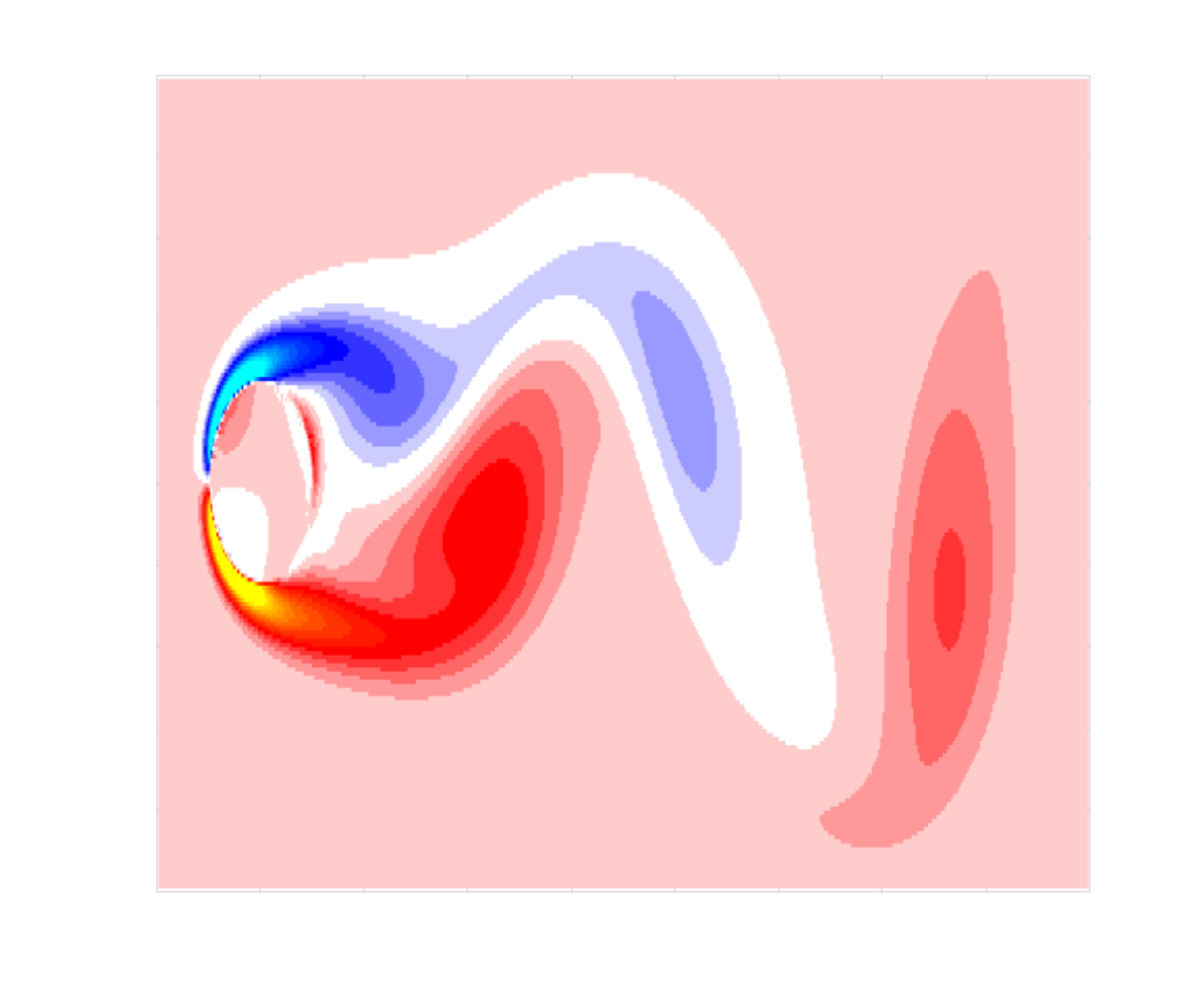}

    \includegraphics[width=0.75in]{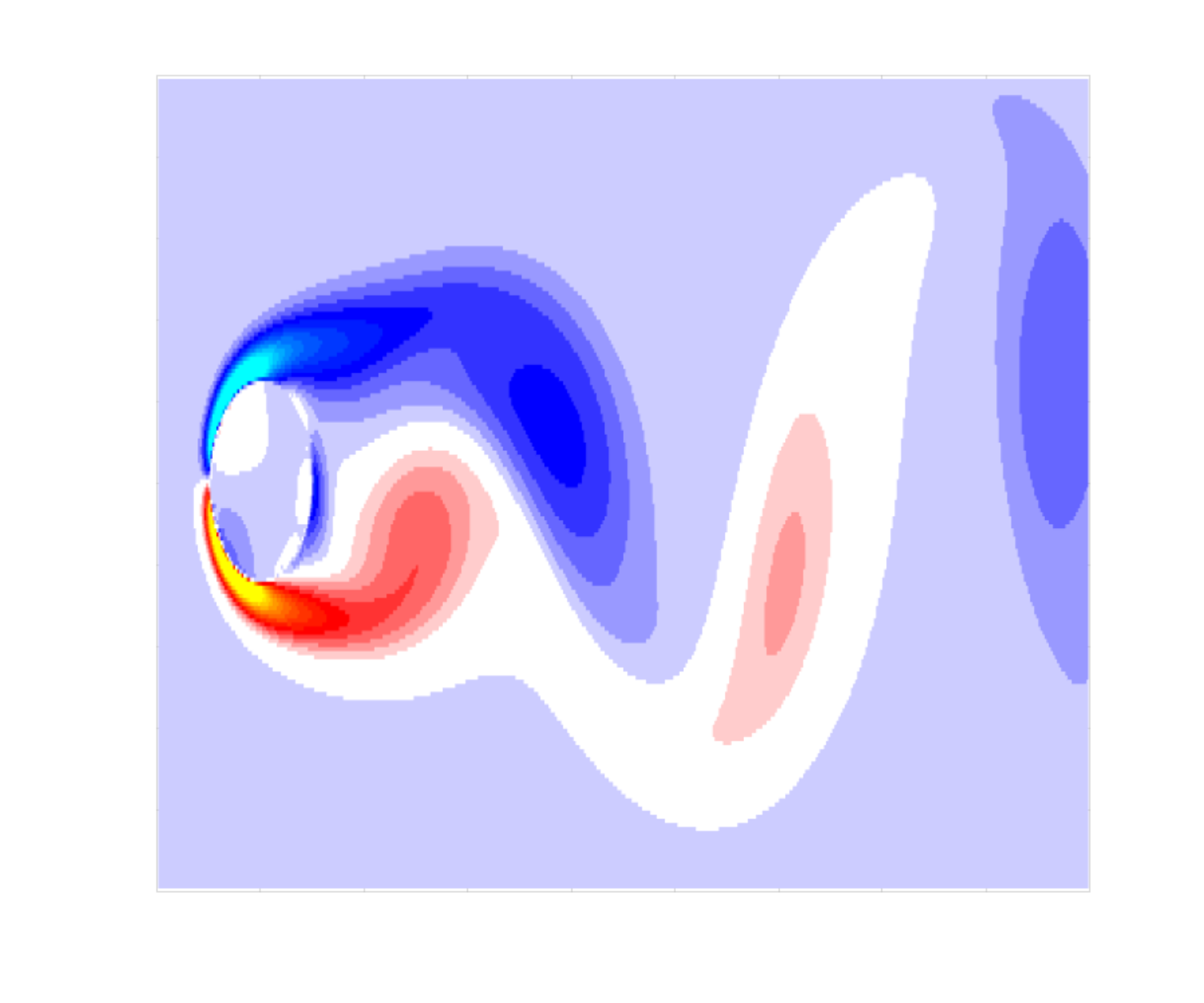}
    \includegraphics[width=0.75in]{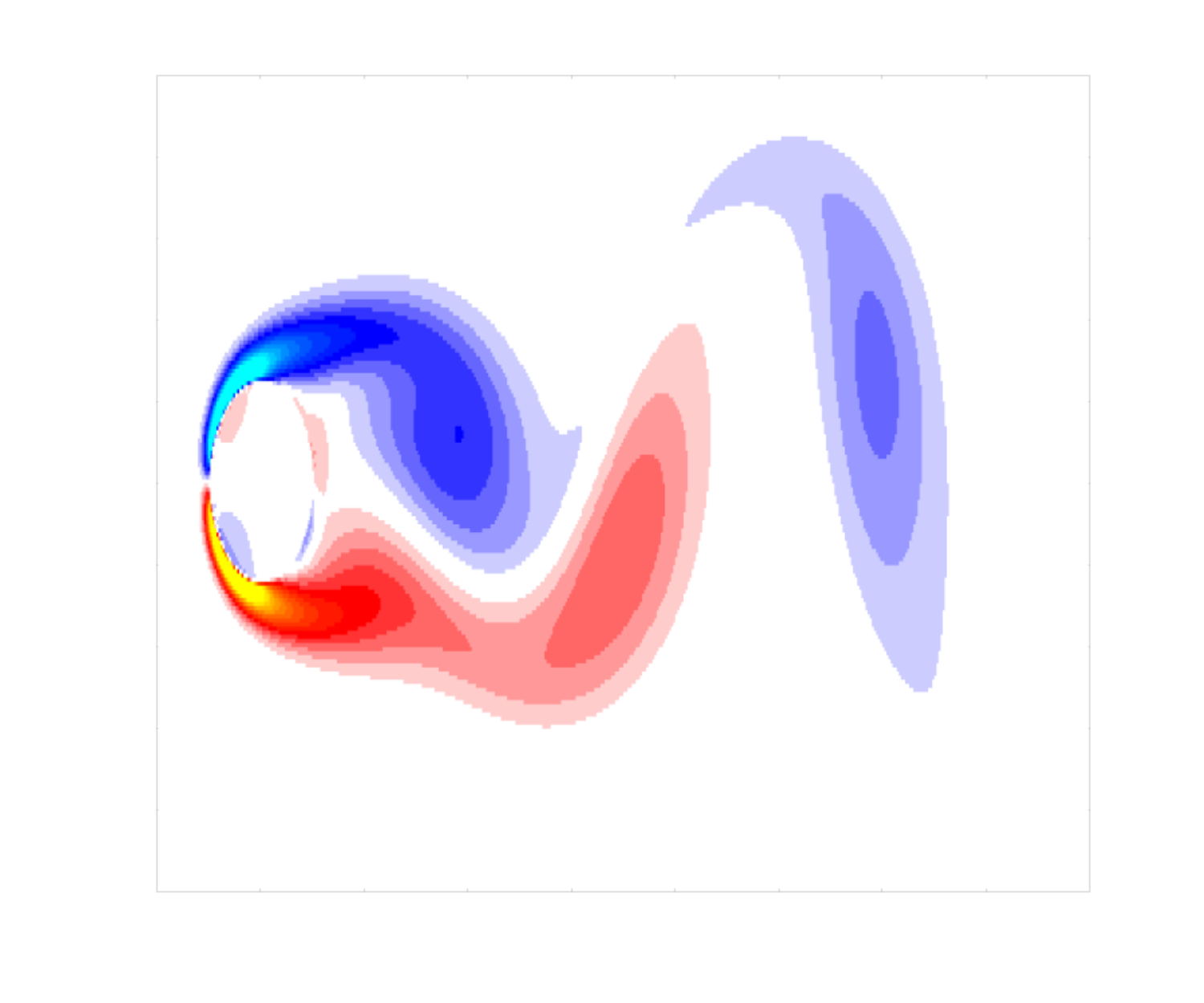}
    \includegraphics[width=0.75in]{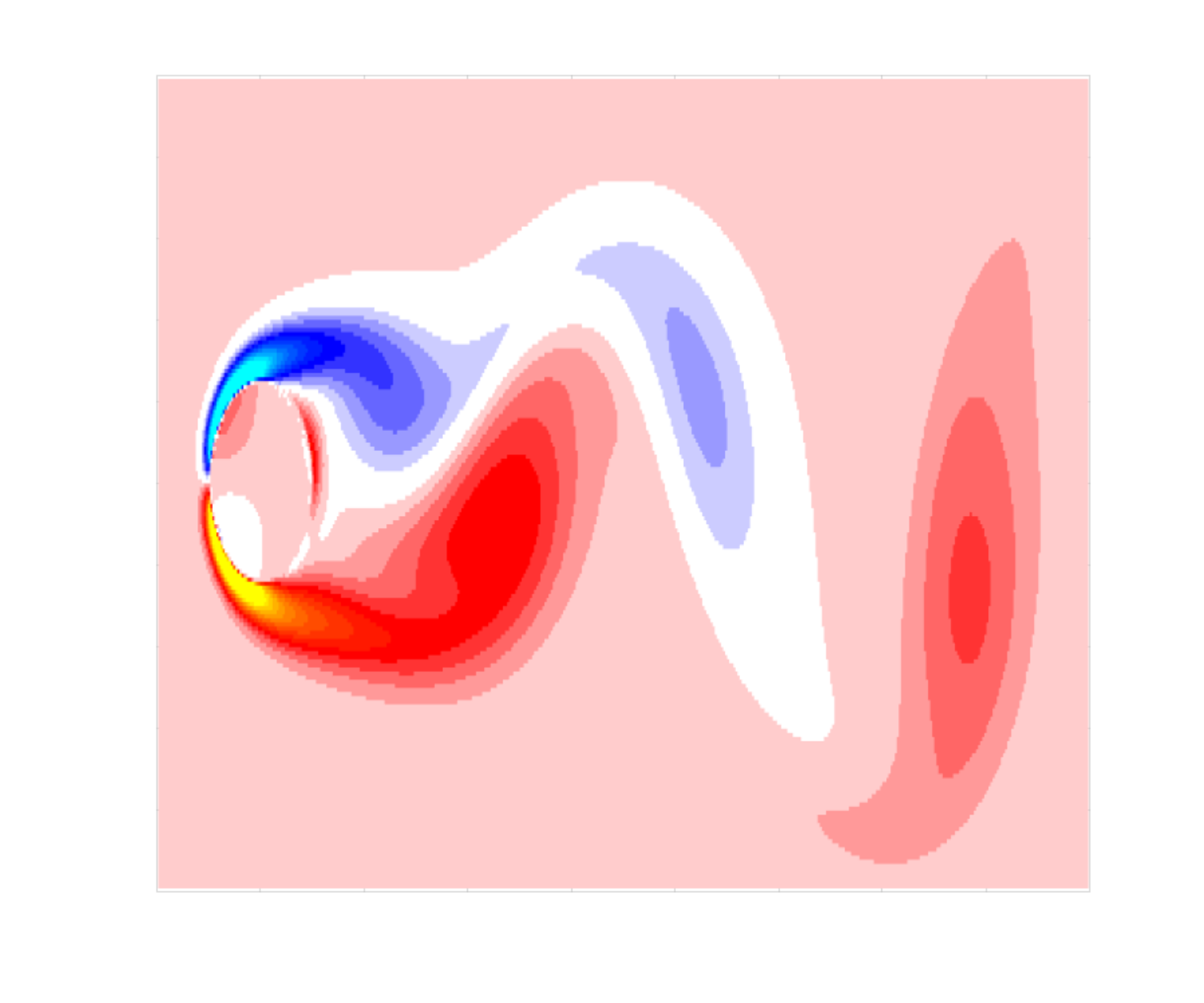}
    
    \includegraphics[width=0.75in]{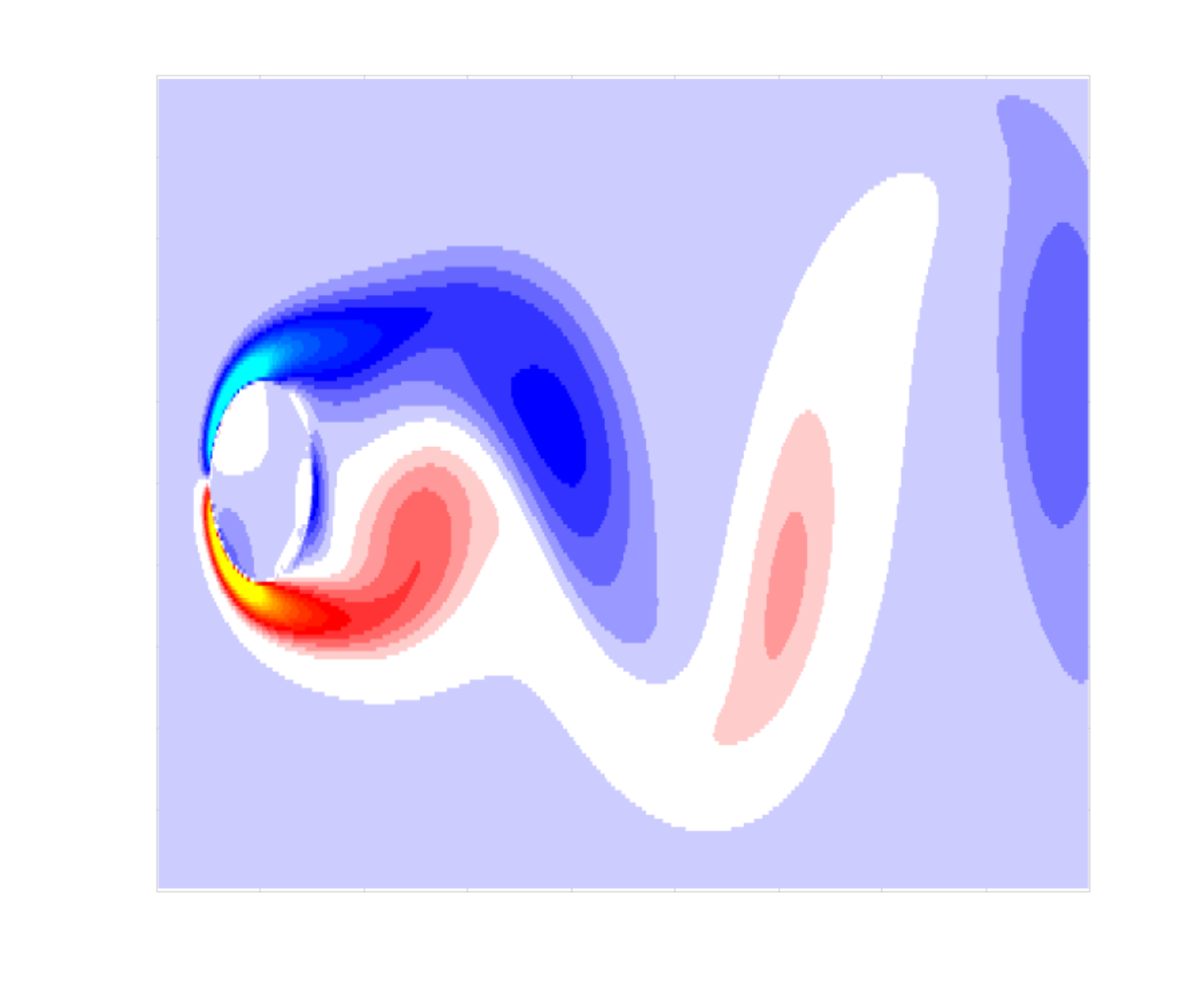}
    \includegraphics[width=0.75in]{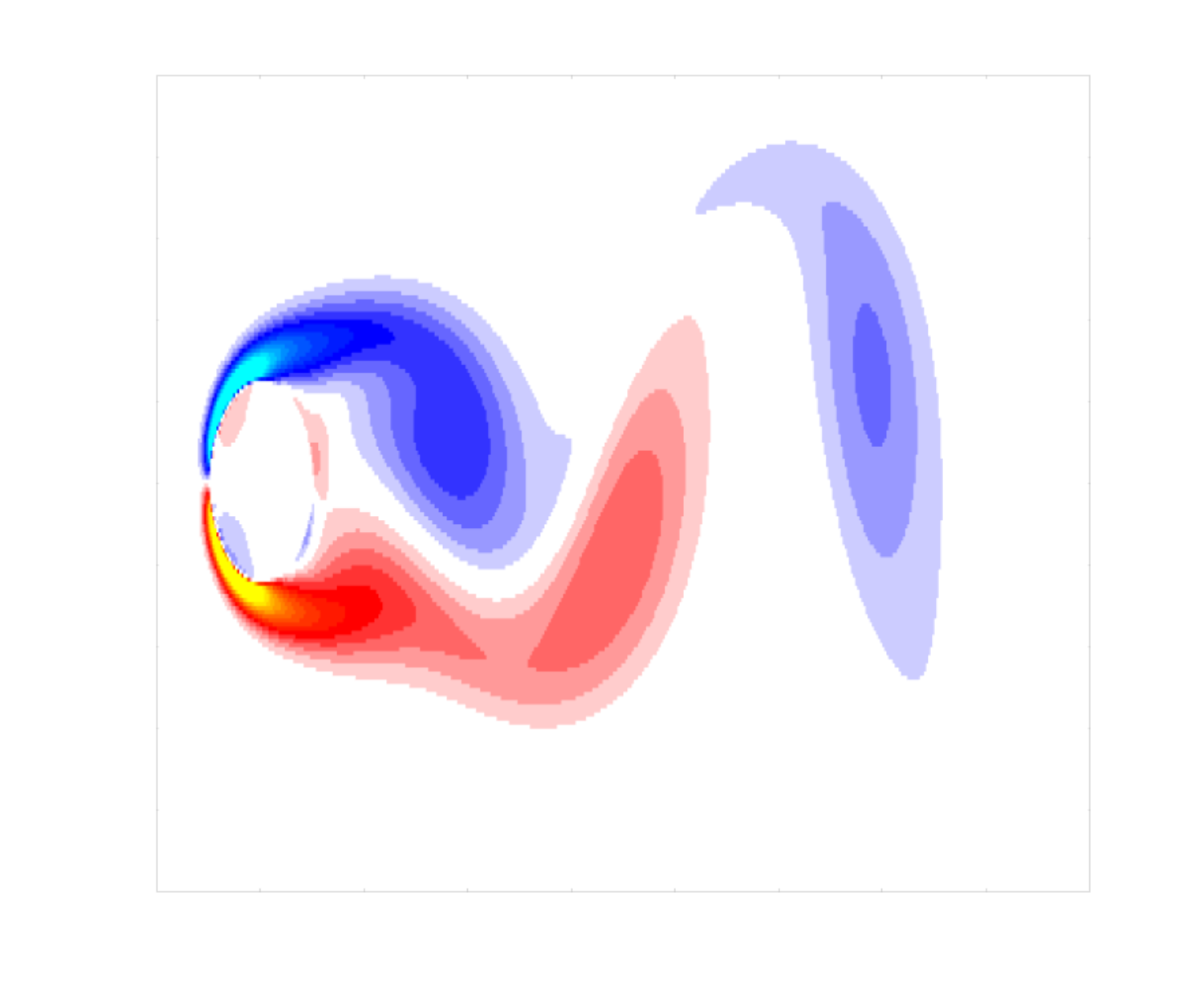}
    \includegraphics[width=0.75in]{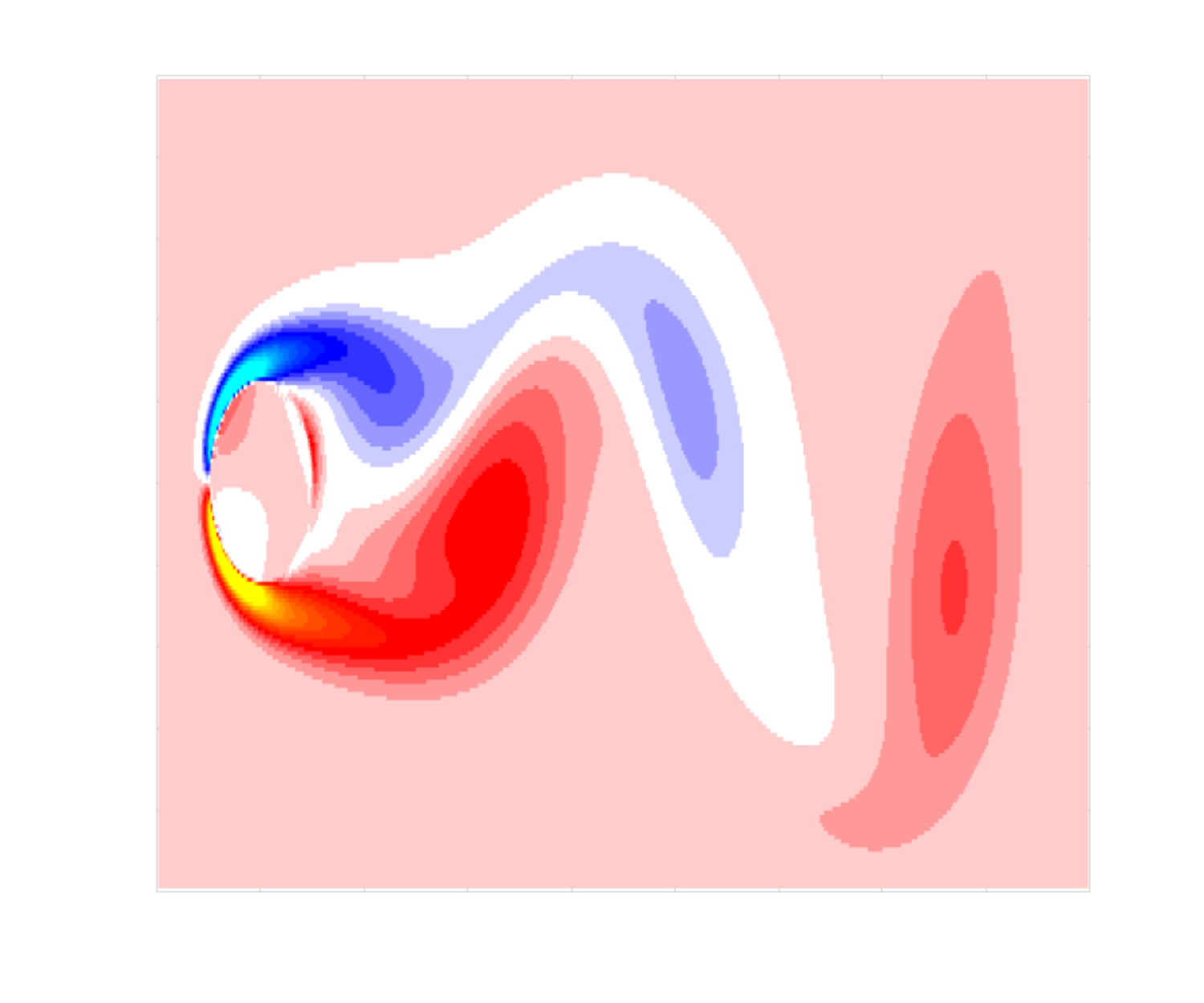}
    \caption{This figure presents prediction of the vorticity field at the same time points using two different kernel functions. The left column presents the prediction of, $x_{51}$. The middle column shows the reconstructions of the state, $x_{101}$, and the right column corresponds to the reconstruction of the state $x_{151}$.}
    \label{fig:prediction}
    \end{subfigure}
    \caption{This figure presents reconstruction and prediction of the vorticity of a fluid flow past a cylinder using two different kernel functions. The first rows contains the ground truth, the second rows leverages the Gaussian RBF kernel function, and the third row uses the exponential dot product kernel function.}
\end{figure}

\section{Numerical Example}\label{sec:example}

The website 
accompanying the textbook \citep{kutz2016dynamic} provides several data sets that serve as benchmarks for spectral decomposition approaches to nonlinear modeling, which have been released to the public through their website at \url{http://www.dmdbook.com/}. This section utilizes the cylinder flow data set to demonstrate the results of the developed DMD method. The cylinder flow example is numerically generated, and the data provided corresponds to a planar steady state flow of the system. The data set consists of $151$ snapshots, containing values of the vorticity of the flow at several mesh points in a plane, recorded every $0.02$ seconds. In this demonstration, snapshots $1$ through $30$ are used as the input data, and snapshots $2$ through $31$ are used as output data, assuming that the $i$th and $(i+1)$th snapshots satisfy $x_{i+1} = F(x_i)$ for some unknown nonlinear function $F$.

The Koopman modes, eigenvalues, and eigenfunctions are then computed using the developed technique and snapshots $1$ through $30$ are reconstructed using \eqref{eq:reconstruction}. DMD is implemented using the exponential dot product kernel, $K(x,y) = \exp(\frac{1}{\mu} x^T y)$ (with $\mu = 500$), and the Gaussian RBF kernel, $K(x,y) = \exp\left(-\frac{1}{\mu} \| x-y\|^2_2\right)$ (with $\mu = 500$). Each of the kernels result in  comparable reconstruction results that match with the input data as shown in Figure \ref{fig:reconstruction}. 

To further demonstrate the accuracy of the obtained finite-dimensional representation of the Koopman operator, snapshots $32$ through $151$ are \emph{predicted} using \eqref{eq:reconstruction}. As shown in Figure \ref{fig:prediction}, given the first $31$ snapshots, the developed DMD technique is able to accurately predict the remaining $120$ snapshots \emph{without the knowledge of the underlying physics, $F$.}

\section{Conclusion}\label{sec:conclusion}
This manuscript revisits theoretical assumptions concerning DMD of Koopman operators, including the existence of lattices of eigenfunctions, common eigenfunctions between Koopman operators, and boundedness and compactness of Koopman operators. Counterexamples that illustrate restrictiveness of the assumptions are provided for each of the assumptions. In particular, a major theoretical result is established to show that the native RKHS of the Gaussian RBF kernel function only supports bounded Koopman operators if the dynamics are affine. Moreover, a kernel-based DMD algorithm that simplifies the algorithm in \citep{williams2015data} and presents it in an operator theoretic context is developed and validated through simulations. 

\begin{ack}
This research was supported by the Air Force Office of Scientific Research (AFOSR) under contract numbers FA9550-20-1-0127 and FA9550-21-1-0134, and the National Science Foundation (NSF) under award numbers 2027976 and 1900364. Any opinions, findings and conclusions or recommendations expressed in this material are those of the author(s) and do not necessarily reflect the views of the sponsoring agencies.
\end{ack}

\small
\bibliography{references}

\begin{thebibliography}{28}
\providecommand{\natexlab}[1]{#1}
\providecommand{\url}[1]{\texttt{#1}}
\expandafter\ifx\csname urlstyle\endcsname\relax
  \providecommand{\doi}[1]{doi: #1}\else
  \providecommand{\doi}{doi: \begingroup \urlstyle{rm}\Url}\fi

\bibitem[Aronszajn(1950)]{aronszajn1950theory}
N.~Aronszajn.
\newblock Theory of reproducing kernels.
\newblock \emph{Trans. Am. Math. Soc.}, 68\penalty0 (3):\penalty0 337--404,
  1950.

\bibitem[Bittracher et~al.(2015)Bittracher, Koltai, and
  Junge]{bittracher2015pseudogenerators}
A.~Bittracher, P.~Koltai, and O.~Junge.
\newblock Pseudogenerators of spatial transfer operators.
\newblock \emph{SIAM Journal on Applied Dynamical Systems}, 14\penalty0
  (3):\penalty0 1478--1517, 2015.

\bibitem[Boas(2011)]{boas2011entire}
R.~P. Boas.
\newblock \emph{Entire functions}.
\newblock Academic Press, 2011.

\bibitem[Brunton and Kutz(2019)]{brunton2019data}
S.~L. Brunton and J.~N. Kutz.
\newblock \emph{Data-driven science and engineering: Machine learning,
  dynamical systems, and control}.
\newblock Cambridge University Press, 2019.

\bibitem[Brunton et~al.(2021)Brunton, Budi{\v{s}}i{\'c}, Kaiser, and
  Kutz]{brunton2021modern}
S.~L. Brunton, M.~Budi{\v{s}}i{\'c}, E.~Kaiser, and J.~N. Kutz.
\newblock Modern koopman theory for dynamical systems.
\newblock \emph{arXiv preprint arXiv:2102.12086}, 2021.

\bibitem[Budi{\v{s}}i{\'c} et~al.(2012)Budi{\v{s}}i{\'c}, Mohr, and
  Mezi{\'c}]{budivsic2012applied}
M.~Budi{\v{s}}i{\'c}, R.~Mohr, and I.~Mezi{\'c}.
\newblock Applied koopmanism.
\newblock \emph{Chaos: An Interdisciplinary Journal of Nonlinear Science},
  22\penalty0 (4):\penalty0 047510, 2012.

\bibitem[Carswell et~al.(2003)Carswell, MacCluer, and
  Schuster]{carswell2003composition}
B.~Carswell, B.~D. MacCluer, and A.~Schuster.
\newblock Composition operators on the fock space.
\newblock \emph{Acta Sci. Math.(Szeged)}, 69\penalty0 (3-4):\penalty0 871--887,
  2003.

\bibitem[Chac{\'o}n and Gim{\'e}nez(2007)]{chacon2007composition}
G.~Chac{\'o}n and J.~Gim{\'e}nez.
\newblock Composition operators on spaces of entire functions.
\newblock \emph{Proceedings of the American Mathematical Society}, 135\penalty0
  (7):\penalty0 2205--2218, 2007.

\bibitem[Folland(1999)]{folland1999real}
G.~B. Folland.
\newblock \emph{Real analysis: modern techniques and their applications},
  volume~40.
\newblock John Wiley \& Sons, 1999.

\bibitem[Hall(2013)]{hall2013quantum}
B.~C. Hall.
\newblock \emph{Quantum theory for mathematicians}, volume 267.
\newblock Springer, 2013.

\bibitem[Kawahara(2016)]{kawahara2016dynamic}
Y.~Kawahara.
\newblock Dynamic mode decomposition with reproducing kernels for koopman
  spectral analysis.
\newblock In \emph{Advances in neural information processing systems}, pages
  911--919, 2016.

\bibitem[Klus et~al.(2015)Klus, Koltai, and Sch{\"u}tte]{klus2015numerical}
S.~Klus, P.~Koltai, and C.~Sch{\"u}tte.
\newblock On the numerical approximation of the perron-frobenius and koopman
  operator.
\newblock \emph{arXiv preprint arXiv:1512.05997}, 2015.

\bibitem[Koopman(1931)]{koopman1931hamiltonian}
B.~O. Koopman.
\newblock Hamiltonian systems and transformation in hilbert space.
\newblock \emph{Proceedings of the National Academy of Sciences of the United
  States of America}, 17\penalty0 (5):\penalty0 315, 1931.

\bibitem[Korda and Mezi{\'c}(2018)]{korda2018convergence}
M.~Korda and I.~Mezi{\'c}.
\newblock On convergence of extended dynamic mode decomposition to the koopman
  operator.
\newblock \emph{Journal of Nonlinear Science}, 28\penalty0 (2):\penalty0
  687--710, 2018.

\bibitem[Kutz et~al.(2016)Kutz, Brunton, Brunton, and Proctor]{kutz2016dynamic}
J.~N. Kutz, S.~L. Brunton, B.~W. Brunton, and J.~L. Proctor.
\newblock \emph{Dynamic mode decomposition: data-driven modeling of complex
  systems}.
\newblock SIAM, Philadelphia, PA, USA, 2016.

\bibitem[Mauroy and Mezi{\'c}(2016)]{mauroy2016global}
A.~Mauroy and I.~Mezi{\'c}.
\newblock Global stability analysis using the eigenfunctions of the koopman
  operator.
\newblock \emph{IEEE Trans. Autom. Control}, 61\penalty0 (11):\penalty0
  3356--3369, 2016.

\bibitem[Mezi{\'c}(2005)]{mezic2005spectral}
I.~Mezi{\'c}.
\newblock Spectral properties of dynamical systems, model reduction and
  decompositions.
\newblock \emph{Nonlinear Dyn.}, 41\penalty0 (1):\penalty0 309--325, 2005.
\newblock \doi{10.1007/s11071-005-2824-x}.

\bibitem[Pedersen(2012)]{pedersen2012analysis}
G.~K. Pedersen.
\newblock \emph{Analysis now}.
\newblock Springer Science \& Business Media, 2012.

\bibitem[Rosenfeld et~al.(2022)Rosenfeld, Kamalapurkar, Gruss, and
  Johnson]{rosenfeld2022dynamic}
J.~A. Rosenfeld, R.~Kamalapurkar, L.~Gruss, and T.~T. Johnson.
\newblock Dynamic mode decomposition for continuous time systems with the
  liouville operator.
\newblock \emph{Journal of Nonlinear Science}, 32\penalty0 (1):\penalty0 1--30,
  2022.

\bibitem[Schmid(2010)]{schmid_2010}
P.~J. Schmid.
\newblock Dynamic mode decomposition of numerical and experimental data.
\newblock \emph{J. Fluid Mech.}, 656:\penalty0 5–28, 2010.
\newblock \doi{10.1017/S0022112010001217}.

\bibitem[Shapiro(2012)]{shapiro2012composition}
J.~H. Shapiro.
\newblock \emph{Composition operators: and classical function theory}.
\newblock Springer Science \& Business Media, 2012.

\bibitem[Singh and Kumar(1979)]{singh1979compact}
R.~Singh and A.~Kumar.
\newblock Compact composition operators.
\newblock \emph{Journal of the Australian Mathematical Society}, 28\penalty0
  (3):\penalty0 309--314, 1979.

\bibitem[Steinwart and Christmann(2008)]{steinwart2008support}
I.~Steinwart and A.~Christmann.
\newblock \emph{Support vector machines}.
\newblock Springer Science \& Business Media, 2008.

\bibitem[Takeishi et~al.(2017)Takeishi, Kawahara, and
  Yairi]{takeishi2017learning}
N.~Takeishi, Y.~Kawahara, and T.~Yairi.
\newblock Learning koopman invariant subspaces for dynamic mode decomposition.
\newblock \emph{arXiv preprint arXiv:1710.04340}, 2017.

\bibitem[Walters(2000)]{walters2000introduction}
P.~Walters.
\newblock \emph{An introduction to ergodic theory}, volume~79.
\newblock Springer Science \& Business Media, 2000.

\bibitem[Williams et~al.(2015{\natexlab{a}})Williams, Kevrekidis, and
  Rowley]{williams2015data}
M.~O. Williams, I.~G. Kevrekidis, and C.~W. Rowley.
\newblock A data--driven approximation of the koopman operator: Extending
  dynamic mode decomposition.
\newblock \emph{J. Nonlinear Sci.}, 25\penalty0 (6):\penalty0 1307--1346,
  2015{\natexlab{a}}.

\bibitem[Williams et~al.(2015{\natexlab{b}})Williams, Rowley, and
  Kevrekidis]{williams2014kernel}
M.~O. Williams, C.~W. Rowley, and I.~G. Kevrekidis.
\newblock A kernel-based method for data-driven {K}oopman spectral analysis.
\newblock \emph{J. Comput. Dyn.}, 2\penalty0 (2):\penalty0 247--265,
  2015{\natexlab{b}}.

\bibitem[Zhu(2012)]{zhu2012analysis}
K.~Zhu.
\newblock \emph{Analysis on Fock spaces}, volume 263.
\newblock Springer Science \& Business Media, 2012.

\end{thebibliography}

\end{document}